\theoremstyle{plain}
\newtheorem{theorem}[subsection]{Theorem}
\newtheorem{lemma}[subsection]{Lemma}
\newtheorem{proposition}[subsection]{Proposition}
\newtheorem{corollary}[subsection]{Corollary}
\theoremstyle{definition}
\newtheorem{definition}[subsection]{Definition}
\theoremstyle{remark}
\newtheorem{example}[subsection]{Example}
\newtheorem{remark}[subsection]{Remark}
\newcommand{\defn}[1]{\emph{#1}}
\newcommand{\caixa}[1]{\framebox{\scriptsize\ensuremath{\mathrm{#1}}}}
\newcommand{\C}{\ensuremath{\mathbb{C}}}
\newcommand{\X}{\ensuremath{\mathbb{X}}}
\newcommand{\R}{\ensuremath{\mathbb{R}}}
\newcommand{\N}{\ensuremath{\mathbb{N}}}
\newcommand{\Z}{\ensuremath{\mathbb{Z}}}
\newcommand{\Xx}{\ensuremath{\mathsf{X}}}
\newcommand{\Grp}{\ensuremath{\mathsf{Grp}}}
\newcommand{\Ab}{\ensuremath{\mathsf{Ab}}}
\newcommand{\Ord}{\ensuremath{\mathsf{Ord}}}
\newcommand{\OrdAb}{\ensuremath{\mathsf{OrdAb}}}
\newcommand{\OOrdAb}{\ensuremath{\mathbb{O}}\ensuremath{\mathsf{rdAb}}}
\newcommand{\OrdGrp}{\ensuremath{\mathsf{OrdGrp}}}
\newcommand{\OOrdGrp}{\ensuremath{\mathbb{O}}\ensuremath{\mathsf{rdGrp}}}
\newcommand{\VGrp}{V\mbox{-}\ensuremath{\mathsf{Grp}}}
\newcommand{\VAb}{V\mbox{-}\ensuremath{\mathsf{Ab}}}
\newcommand{\OVAb}{\mathbb{V}\mbox{-}\ensuremath{\mathsf{Ab}}}
\newcommand{\Set}{\ensuremath{\mathsf{Set}}}
\newcommand{\Pt}{\ensuremath{\mathsf{Pt}}}
\newcommand{\Ptr}{\ensuremath{\mathsf{Ptr}}}
\newcommand{\Ptl}{\ensuremath{\mathsf{Ptl}}}
\newcommand{\Eq}{\ensuremath{\mathsf{Eq}}}
\def\mathrmdef#1{\expandafter\def\csname#1\endcsname{{\rm#1}}}
\newcommand{\ito}{\dashrightarrow}
\newcommand{\la}{\langle}
\newcommand{\ra}{\rangle}
\renewcommand{\prec}{\preccurlyeq}
\def\pullback{
 \ar@{-}[]+R+<6pt,-1pt>;[]+RD+<6pt,-6pt>%
 \ar@{-}[]+D+<1pt,-6pt>;[]+RD+<6pt,-6pt>}
\def\ophalfsplitpullback{%
 \ar@{-}[]+R+<6pt,-1pt>;[]+RD+<6pt,-6pt>%
 \ar@{-}[]+D+<.5ex,-6pt>;[]+RD+<6pt,-6pt>}
\begin{document}

\title{On lax protomodularity for $\Ord$-enriched categories}

\author{Maria Manuel Clementino}
\address{CMUC, Department of Mathematics, University of
Coimbra, 3001-501 Coimbra, Portugal}\thanks{}
\email{mmc@mat.uc.pt}

\author{Andrea Montoli}
\address{Dipartimento di Matematica ``Federigo Enriques'', Universit\`{a} degli Studi di Milano, Via Saldini 50, 20133 Milano, Italy}
\thanks{}
\email{andrea.montoli@unimi.it}

\author{Diana Rodelo}
\address{Departamento de Matem\'atica, Faculdade de Ci\^{e}ncias e Tecnologia, Universidade
do Algarve, Campus de Gambelas, 8005-139 Faro, Portugal and CMUC, Department of Mathematics, University of Coimbra, 3001-501 Coimbra, Portugal}
\thanks{The first and third author acknowledge partial financial assistance by the Centre for Mathematics
of the University of Coimbra -- UIDB/00324/2020, funded by the Portuguese Government through
FCT/MCTES}
\email{drodelo@ualg.pt}

\keywords{preordered abelian groups, protomodular categories, categories enriched in the category of preorders\\
\indent \emph{Mathematics Subject Classification.} 18E13, 06F20, 18D20, 18N10}


\begin{abstract}
Our main focus concerns a possible lax version of the algebraic property of protomodularity for $\Ord$-enriched categories. Our motivating example is the category $\OrdAb$ of preordered abelian groups; indeed, while abelian groups form a protomodular category, $\OrdAb$ does not.

Having in mind the role of comma objects in the enriched context, we consider some of the characteristic properties of protomodularity with respect to comma objects instead of pullbacks. We show that the equivalence between protomodularity and certain properties on pullbacks also holds when replacing conveniently pullbacks by comma objects in any finitely complete category enriched in $\Ord$, and propose to call lax protomodular such $\Ord$-enriched categories. We conclude by studying this sort of lax protomodularity for $\OrdAb$, equipped with a suitable $\Ord$-enrichment, and show that $\OrdAb$ fulfills the equivalent lax protomodular properties with respect to the weaker notion of \emph{precomma object}; we call such categories lax preprotomodular.
\end{abstract}

\maketitle

\section*{Introduction}
The category $\OrdGrp$ of preordered groups was recently studied in~\cite{CM-FM}, where the authors exhibited several of their algebraic and homological properties. This category differs from the category of internal groups in the category $\Ord$ of preorders and monotone maps, since the inversion morphism of the group structure is not necessarily monotone (it is, in fact, anti-monotone). As a consequence, many of the nice algebraic properties of groups fail to hold in that context. Indeed, in~\cite{CM-FM} it was shown that $\OrdGrp$ shares several algebraic properties with the category $\mathsf{Mon}$ of monoids, but fails to share some of the algebraic properties of the category $\Grp$ of groups. Similar observations can be made for the category $\OrdAb$ of preordered abelian groups and $\Ab$ of abelian groups.

This work focuses on the algebraic property of protomodularity~\cite{NEKEAC}, which in a pointed finitely complete context is equivalent to the validity of the Split Short Five Lemma. The categories $\Grp$ and $\Ab$ are protomodular while $\OrdGrp$ and $\OrdAb$ are not (see~\cite{CM-FM}). It is as if the preorder structure works against protomodularity. This led us to the following question:
\begin{center}
	\textsf{Is there an $\Ord$-enrichment of these categories so that protomodularity is recovered in a lax sense}?
\end{center}
The aim of this work is to give a positive answer to this question for $\OrdAb$; more precisely: there is a preordered structure on morphisms that does work in favour of lax protomodularity in a sense which we explain next.

A finitely complete category $\Xx$ is protomodular~\cite{NEKEAC} when the pullback functors $\alpha^*\colon \Pt_Y(\Xx) \to \Pt_A(\Xx)$ are conservative for any morphism $\alpha\colon A\to Y$. We recall in Theorem~\ref{characterization of protomodularity} well-known equivalent conditions expressed by properties on pullbacks which characterise protomodularity. To obtain a lax version of these equivalences for a finitely complete $\Ord$-enriched category $\C$ with comma objects and $2$-pullbacks, we replace pullbacks conveniently with comma objects or $2$-pullbacks and the pullback change-of-base functors with comma object change-of-base functors. Since comma objects are defined on ordered pairs of morphisms, we have two possible change-of-base functors, given a morphism $\alpha\colon A\to Y$: the \emph{vertical} comma object functor $V_{\alpha}\colon \Pt_Y(\C)\to \Pt_A(\C)$ and the \emph{horizontal} comma object functor $H_{\alpha}\colon \Pt_Y(\C)\to \Pt_A(\C)$. We show that the lax version of the equivalences in Theorem~\ref{characterization of protomodularity} hold for $\C$; see Theorem~\ref{V conservative iff |1|2|}, Corollary~\ref{H conservative iff |1|2|}, Theorem~\ref{V conservative on monos <=> (pi_2,s) jse} and Corollary~\ref{H conservative on monos <=> (pi_1,s) jse}.

Due to the above characterisations, we propose to call a finitely complete $\Ord$-enriched category $\C$ which admits comma objects and $2$-pullbacks \defn{lax protomodular} when the comma object functor $V_{\alpha}$ is conservative for any morphism $\alpha$ in $\C$. We shall call $\C$ \defn{colax protomodular} when $\C^\co$ is lax protomodular, that is, when $H_{\alpha}$ is conservative for any morphism $\alpha$ in $\C$.
The interplay between the different ingredients encoded on protomodularity is worked out in Section \ref{Ord-protomodularity}.

It turns out that $\OrdAb$, equipped with the $\Ord$-enrichment mentioned above, only admits the weaker notion of \emph{precomma objects}, not all comma objects. For this reason it fails to be lax protomodular. However, if we focus instead on precomma objects, vertical (and horizontal) precomma object functors and pullbacks, the equivalences above still hold in this weaker context. We call a finitely complete $\Ord$-enriched category $\C$ with precomma objects \defn{lax preprotomodular} (resp. \defn{colax preprotomodular}) when the precomma object functor $V_{\alpha}$ (resp. $H_\alpha$) is conservative for any morphism $\alpha$ in $\C$. We show in Section~\ref{The category of preordered abelian groups OrdAb} that $\OOrdAb$ is lax preprotomodular but not colax preprotomodular.

In addition, we analyse the behaviour of two different factorization systems in $\OOrdAb$, which correspond to (bijective on objects, fully faithful) and (surjective on objects, monic and fully faithful) factorizations, showing that they are not pullback-stable, and so $\OOrdAb$ fails to be $\Ord$-regular.

\section{The $\Ord$-enriched change-of-base functors}\label{The comma object functors}

In the following $\C$ denotes a finitely complete category enriched in the category $\Ord$ of preordered sets and monotone maps. Recall that a \emph{preorder} is a reflexive and transitive relation. This means that for any objects $X$ and $Y$ of $\C$, $\C(X,Y)$ is equipped with a preorder such that (pre)composition preserves it. We will denote this preorder of morphisms by $\preccurlyeq$. If we consider in $\C$ the reverse preorder we obtain again an $\Ord$-enriched category which we denote, as usual, by $\C^\co$.

A morphism $f\colon X\to Y$ is said to be \defn{fully faithful} when: given morphisms $a,a'\colon A\to X$ such that $fa \preccurlyeq fa'$, then $a\preccurlyeq a'$; equivalently, $fa\preccurlyeq fa'$ if and only if $a\preccurlyeq a'$.

Given an ordered pair of morphisms $(f\colon X\to Y, g\colon Z\to Y)$ in $\C$ with common codomain, the \defn{(strict) comma object} of $(f,g)$ is defined by an object $C$ and morphisms $c_1\colon C\to X$, $c_2\colon C\to Z$ such that
\begin{itemize}
	\item[(C1)] $fc_1\preccurlyeq gc_2$;
	\item[(C2)] it has the universal property: given morphisms $\alpha\colon A\to X$ and $\beta\colon A\to Z$ with $f\alpha\preccurlyeq g\beta$, there exists a unique morphism $\lambda\colon A\to C$ such that $c_1\lambda =\alpha$ and $c_2\lambda = \beta$, as in diagram \eqref{comma f/g};
	\item[(C3)] for morphisms $\alpha,\alpha'\colon A\to X$, $\beta,\beta'\colon A\to Z$ such that $f\alpha\prec g\beta$, $f\alpha'\prec g\beta'$, $\alpha\prec \alpha'$ and $\beta\prec \beta'$, the corresponding unique morphisms $\lambda,\lambda'\colon A\to C$ verify $\lambda\prec \lambda'$;
\end{itemize}

\begin{equation}
\label{comma f/g}
\vcenter{\xymatrix@=30pt{ A \ar@(d,l)[ddr]_-{\alpha} \ar@(u,r)[drr]^-{\beta} \ar@{.>}[dr]^-{\lambda} \\
								 & C \ar[r]^-{c_2} \ar[d]_-{c_1} \ar@{}[dr]|-{\preccurlyeq} & Z \ar[d]^-g \\
								 & X \ar[r]_-f & Y.}}
\end{equation}
The comma object of $(f,g)$ will be denoted by $f/g$, its ``projections'' by $\pi_1\colon$ $f/g\to X$ and $\pi_2\colon f/g\to Z$, and the induced morphism as above by $\lambda=\la \alpha,\beta\ra$. Note that, if $\C$ admits\linebreak 2-products then condition (C3) is equivalent to the full faithfulness of the morphism\linebreak $\la c_1,c_2\ra\colon C\to X\times Z$.

We call a construction as above the \defn{precomma object} of $(f,g)$ when only conditions (C1) and (C2) are required to hold.

Given a comma object diagram as \eqref{comma f/g}, if $fc_1=gc_2$, then it is easy to check that $f/g$ is the 2-pullback of $(f,g)$; similarly the precomma object of $(f,g)$ coincides with the pullback of $(f,g)$.

From now on, $\C$ will denote a finitely complete category enriched in $\Ord$ which admits (pre)comma objects. Our main example $\OOrdAb$ is such a category (see Section~\ref{The category of preordered abelian groups OrdAb}).

Given an object $Y$, as usual we denote by $\C/Y$ the slice category of $\C$ over $Y$. Our main change-of-base functors will be defined on points over $Y$. Here by \emph{point over $Y$} we mean a morphism from the terminal object $1_Y\colon Y\to Y$ into an arbitrary object $f\colon X\to Y$ of $\C/Y$; that is, a $\C$-morphism $s\colon Y\to X$ so that $fs=1_Y$. Hence a point in $\C$ is given by a split epimorphism $f\colon X\to Y$ with a chosen splitting $s\colon Y\to X$. We denote by $\Pt_Y(\C)$ the category of points over $Y$ in $\C$.

Given a morphism $\alpha\colon A\to Y$ in $\C$, we can define two possible functors by taking comma objects along points. For a point $\xymatrix{X \ar@<2pt>[r]^-f & Y \ar@<2pt>[l]^-s}$, we form the comma objects
\[
\xymatrix@=30pt{A \ar@(d,l)[ddr]_-{1_A} \ar@(u,r)[drr]^-{s\alpha} \ar@{.>}[dr]^-{\langle 1_A,s\alpha\rangle} &&&& A \ar@(d,l)[ddr]_-{s\alpha} \ar@(u,r)[drr]^-{1_A} \ar@{.>}[dr]^-{\langle s\alpha,1_A\rangle} \\
								 & \alpha/f \ar[r]^-{\pi_2} \ar[d]_-{\pi_1} \ar@{}[dr]|-{\preccurlyeq} & X \ar@<-2pt>[d]_-f&\mbox{ and }&& f/\alpha \ar[r]^-{\pi_2} \ar[d]_-{\pi_1} \ar@{}[dr]|-{\preccurlyeq} & A \ar[d]^-\alpha & \\
								\mbox{(1.ii)} & A \ar[r]_-\alpha & Y, \ar@<-2pt>[u]_-s&& & X \ar@<2pt>[r]^-f & Y. \ar@<2pt>[l]^-s&\mbox{(1.iii)}}
\]
These constructions define respectively the \emph{vertical} and \emph{horizontal comma objects change-of-base functors}:

$$\begin{array}{rrcl}
	V_{\alpha}: & \Pt_Y(\C) & \longrightarrow & \Pt_A(\C) \\
	& \xymatrix@C=40pt@R=30pt{Z \ar[dr]^-{\gamma} \ar@<-6pt>[ddr]_g \\ & X \ar[d]_-f \\ & Y \ar@<-4pt>[u]_-s \ar@<2pt>[uul]} & \longmapsto &
	\xymatrix@C=40pt@R=30pt{\alpha/g \ar[dr]^-{V_{\alpha}(\gamma)} \ar@<-6pt>[ddr] \\ & \alpha/f \ar[d]_-{\pi_1} \\ & A, \ar@<2pt>[uul] \ar@<-4pt>[u]_-{\la 1_A,s\alpha\ra}}
\end{array}
$$
where $V_{\alpha}(\gamma)$ is induced by the universal property of $\alpha/f$, and

$$
\begin{array}{rrcl}
	H_{\alpha}: & \Pt_Y(\C) & \longrightarrow & \Pt_A(\C) \\
	& \xymatrix@=40pt{Z \ar[dr]_-{\gamma} \ar@<6pt>[drr]^g \\ & X \ar@<2pt>[r]^(.4)f & Y \ar@<-2pt>[ull] \ar@<2pt>[l]^(.6)s} & \longmapsto &
	\xymatrix@=40pt{g/\alpha \ar[dr]_-{H_{\alpha}(\gamma)} \ar@<6pt>[drr] \\ & f/\alpha \ar@<2pt>[r]^(.45){\pi_2} & A,
	\ar@<-2pt>[ull] \ar@<2pt>[l]^-{\la s\alpha, 1_A\ra}}
\end{array}
$$
where $H_{\alpha}(\gamma)$ is induced by the universal property of $f/\alpha$.\\

As in any $\Ord$-enriched category, the notion of adjoint pair of morphisms allows us to consider the following special points, that play special roles in comma objects and consequently in the change-of-base functors, as explained next. A point $(f,s)$ is called \emph{rali} (short for right adjoint, left inverse) when $sf\preccurlyeq 1_X$; it is called \emph{lali} (short for left adjoint, left inverse) when $1_X\preccurlyeq sf$. We denote by $\Ptr_Y(\C)$ (resp. $\Ptl_Y(\C)$) the category of rali (resp. lali) points over $Y$.

\begin{remark}
\label{remark comma objs of rali/lali is rali/lali}
It is easy to check that, in diagram (1.ii), with $\xymatrix{X \ar@<2pt>[r]^-f & Y \ar@<2pt>[l]^-s}$ a rali, also the point
 $\xymatrix{\alpha/f \ar@<2pt>[r]^-{\pi_1} & A \ar@<2pt>[l]^-{\langle 1_A,s\alpha\rangle}}$ is a rali since $\langle \pi_1, \pi_2\rangle\colon \alpha/f\to A \times X$ is fully faithful, $\pi_1\langle 1_A,s\alpha\rangle \pi_1=\pi_1$ and $\pi_2\langle 1_A,s\alpha\rangle \pi_1=s\alpha\pi_1\preccurlyeq sf\pi_2\preccurlyeq \pi_2$.
 Analogously, if $\xymatrix{X \ar@<2pt>[r]^-f & Y \ar@<2pt>[l]^-s}$ is a lali, in diagram (1.iii) the point $\xymatrix{f/\alpha \ar@<2pt>[r]^-{\pi_2} & A \ar@<2pt>[l]^-{\langle s\alpha,1_A\rangle}}$
	is lali.
In particular, for any morphism $\alpha\colon A\to Y$, the point $\xymatrix{\alpha/1_Y\ar@<2pt>[r]^-{\pi_1} & A \ar@<2pt>[l]^-{\langle 1_A,\alpha\rangle}}$ is always rali, while the point $\xymatrix{1_Y/\alpha \ar@<2pt>[r]^-{\pi_2} & A \ar@<2pt>[l]^-{\langle \alpha,1_A\rangle}}$ is always lali.\\

Therefore, \emph{for any morphism $\alpha\colon A\to Y$, $V_{\alpha}$ (co)restricts to a functor on rali points; similarly, $H_{\alpha}$ (co)restricts to a functor on lali points.}
\end{remark}

\section{Lax protomodularity}\label{Ord-protomodularity}

Several algebraic properties in a category $\Xx$ with pullbacks can be expressed by properties of the pullback functors $\alpha^* \colon \Pt_Y(\Xx) \to \Pt_A(\Xx)$, for morphisms $\alpha \colon A \to Y$. One of the key results in this direction is the following (a proof can be found e.g. in \cite{BB}):

\begin{theorem} \label{characterization of protomodularity}
For a category $\Xx$ with pullbacks, the following conditions are equivalent:
\begin{itemize}
\item[(i)] the pullback functors $\alpha^* \colon \Pt_Y(\Xx) \to \Pt_A(\Xx)$ are conservative for every $\alpha \colon A \to Y$;
\item[(ii)] for any commutative diagram of points
\begin{equation}\label{diag:ii}
\xymatrix@=25pt{ \cdot \ar@<-2pt>[d] \ar[r] \ar@{}[dr]|-{\caixa{\mathrm{A}}} & \cdot \ar@<-2pt>[d] \ar[r] \ar@{}[dr]|-{\caixa{\mathrm{B}}} & \cdot \ar@<-2pt>[d] \\
	\cdot \ar@<-2pt>[u] \ar[r] & \cdot \ar@<-2pt>[u] \ar[r] & \cdot \ar@<-2pt>[u]}
\end{equation}
(also commuting with the upward sections) where \caixa{A} and \caixa{A}\caixa{B} are pullbacks, then \caixa{B} is also a pullback;
\item[(iii)] for any pullback of a point $(f,s)$ along an arbitrary morphism
\begin{equation}\label{diag:iii}
\xymatrix@=25pt{ A\times_Y X \ar@<-2pt>[d]_-{\pi_1} \ar[r]^-{\pi_2} \ophalfsplitpullback & X \ar@<-2pt>[d]_-f  \\
							   A \ar@<-2pt>[u] \ar[r] & Y, \ar@<-2pt>[u]_-s }
\end{equation}
the pair $(\pi_2,s)$ is jointly extremally epimorphic.
\end{itemize}
\end{theorem}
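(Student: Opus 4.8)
The plan is to prove the three statements equivalent by establishing the cycle $(i)\To(ii)\To(iii)\To(i)$, exploiting throughout that every square appearing in \eqref{diag:ii} and \eqref{diag:iii} is a pullback of \emph{points} and that $\alpha^*$ is the restriction of the ordinary pullback functor to the fibres $\Pt_Y(\Xx)$. The first two arrows are short diagram manipulations; the last one, $(iii)\To(i)$, is where the real work lies, since one must manufacture conservativity out of a mere joint-extremal-epimorphism condition.

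For $(i)\To(ii)$, write the bottom row of \eqref{diag:ii} as $A_0\xrightarrow{a}A_1\xrightarrow{b}A_2$ and the three points as $(f_i,s_i)$. I would form the genuine pullback $P=A_1\times_{A_2}X_2$ of the right-hand cospan, which is a point over $A_1$, together with the canonical comparison $c\colon X_1\to P$ of points over $A_1$; showing that \caixa{B} is a pullback amounts to showing that $c$ is invertible. Pulling back along $a$ and using the pasting lemma, the hypothesis that \caixa{A} is a pullback identifies $a^*(X_1)$ with $X_0$, while the hypothesis that \caixa{A}\caixa{B} is a pullback identifies $a^*(P)$ with $X_0$ as well, and under these identifications $a^*(c)$ becomes an isomorphism. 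Conservativity of $a^*$, which is exactly $(i)$, then forces $c$ to be an isomorphism.

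For $(ii)\To(iii)$, consider the pullback \eqref{diag:iii} of $(f,s)$ along $\alpha$ and a monomorphism $m\colon M\to X$ through which both $\pi_2$ and $s$ factor, say $\pi_2=mt'$ and $s=ms'$. Setting $g:=fm$, the pair $(g,s')$ is a point over $Y$, and I would form its pullback $Q=A\times_Y M$ along $\alpha$. These data assemble into a diagram of the shape \eqref{diag:ii} with bottom row $A\xrightarrow{\alpha}Y\xrightarrow{1_Y}Y$, left column $(Q,q_1)$, middle column $(M,g,s')$ and right column $(X,f,s)$, the top maps being $q_2$ and $m$ (here $s=ms'$ is what makes the top maps commute with the sections). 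The left square is a pullback by construction, and the outer rectangle is a pullback because the factorization $\pi_2=mt'$ produces, via the universal property of $Q$ together with the fact that $m$ is monic, a two-sided inverse to the comparison $Q\to A\times_Y X$. Hypothesis $(ii)$ then makes the right square a pullback; but the pullback of $f$ along $1_Y$ is $X$ with comparison $m$, so $m$ is an isomorphism, which is exactly joint extremal epimorphicity of $(\pi_2,s)$.

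Finally $(iii)\To(i)$, which I expect to be the main obstacle. I would first isolate the following consequence of $(iii)$: $(\star)$ if $v$ is a morphism of points over $Y$ with $\alpha^*(v)$ invertible, then $v$ is an extremal epimorphism. Indeed, invertibility of $\alpha^*(v)$ lets one read off that both the second projection $\pi_2$ of $\alpha^*$ applied to the codomain and the section of the codomain factor through $v$; if in addition $v=me$ with $m$ monic, then these factor through $m$, so $(iii)$ makes $m$ an isomorphism. Now let $u$ be a morphism of points over $Y$ with $\alpha^*(u)$ invertible; by $(\star)$, $u$ is an extremal epimorphism. To see that $u$ is also monic I would pass to its kernel pair $R=W\times_X W$, which is again a point over $Y$ with the diagonal $\Delta\colon W\to R$ a split monomorphism; since $\alpha^*$ preserves this pullback and $\alpha^*(u)$ is monic, $\alpha^*(\Delta)$ is invertible, so $\Delta$ is an extremal epimorphism by $(\star)$ and hence an isomorphism, whence $u$ is monic. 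A morphism that is at once an extremal epimorphism and a monomorphism is an isomorphism, so $u$ is invertible and $\alpha^*$ is conservative. The delicate points to get right here are the compatibility of all the comparison maps with the section data, so that one genuinely argues inside $\Pt_Y(\Xx)$ and $\Pt_A(\Xx)$, and the verification that the pullback functor $\alpha^*$ preserves kernel pairs.
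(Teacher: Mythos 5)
Your proof is correct. Note first that the paper does not prove Theorem~\ref{characterization of protomodularity} at all — it cites \cite{BB} — so the relevant comparison is with the standard argument and with the enriched analogues the paper actually proves in Section~\ref{Ord-protomodularity}; against those, your cycle $(i)\To(ii)\To(iii)\To(i)$ is essentially the same machinery, leg by leg. Your $(i)\To(ii)$, which tests the comparison morphism $c\colon X_1\to P$ of points over $A_1$ by pulling back along $a$ and pasting, is exactly the mechanism of the proof of Theorem~\ref{V conservative iff |1|2|}, where the comparison $\la f,\chi\ra$ plays the role of your $c$; your $(ii)\To(iii)$, built on the bottom row $A\xrightarrow{\alpha}Y\xrightarrow{1_Y}Y$ with the explicit two-sided inverse $\la \pi_1,t'\ra$ for the outer rectangle (where monicity of $m$ gives $t'k=q_2$), mirrors the degenerate-right-square trick used in the $(ii)\To(i)$ direction of that theorem; and your $(iii)\To(i)$ — establish $(\star)$, then reduce monicity of $u$ to invertibility of the diagonal $\Delta$ of its kernel pair — is precisely the content of Proposition~\ref{V_alpha conservative iff conservative on monos} combined with Theorem~\ref{V conservative on monos <=> (pi_2,s) jse}, i.e.\ conservativity of $\alpha^*$ reduced to conservativity on monomorphisms. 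The two ``delicate points'' you flag are indeed the only ones and both check out: the comparisons are morphisms of points because the given squares commute with the sections (e.g.\ $cs_1=\la f_1s_1,\beta_1 s_1\ra=\la 1_{A_1},s_2b\ra$, and $q_2\la 1_A,s'\alpha\ra=s'\alpha$), and $\alpha^*$ preserves kernel pairs because $\alpha^*\colon \Xx/Y\to\Xx/A$ is right adjoint to $\Sigma_\alpha$, hence preserves the limits that compute them in $\Pt_Y(\Xx)$ — this left-exactness is exactly the property the paper points out \emph{fails} for the comma-object functors $V_\alpha$, $H_\alpha$, which is why its enriched Proposition~\ref{V_alpha conservative iff conservative on monos} needs the separate kernel-pair construction you independently rediscovered. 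In short: correct, complete modulo routine verifications you correctly identified, and structurally the same proof that the paper's lax results are modelled on.
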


A finitely complete category $\Xx$ is called \defn{protomodular}\footnotemark\footnotetext{The original definition only asks for $\Xx$ to admit the existence of pullbacks of points along arbitrary morphisms. When $\Xx$ is such, the equivalences of Theorem~\ref{characterization of protomodularity} still hold.}~\cite{NEKEAC} if the equivalent conditions of the previous theorem hold in $\Xx$.

\begin{example}\label{exa:prot}
Some examples of protomodular categories are (see~\cite{BB} for more examples):\\
\begin{itemize}
	\item The variety $\Grp$ of groups and, more generally, of $\Omega$-groups (the corresponding theory has, among its operations, a unique constant and the group operations).  In fact, in \cite{BJ} the varieties which form a protomodular category were characterized as those for which there exist $n\in\mathbb{N}$ and
\begin{itemize}
\item constants $e_1,\dots,e_n$;
\item binary operations $\alpha_1,\dots,\alpha_n$ such that $\alpha_i(x,x)=e_i$ for all $i=1,\dots,n$;
\item an ($n+1$)-ary operation $\theta$ such that $\theta(\alpha_1(x,y),\dots,\alpha_n(x,y),y)=x$.
\end{itemize}
	\item Any additive category with finite limits.
	\item $\Set^{\op}$ and, more generally, the dual of any elementary topos.
\end{itemize}
\end{example}

If, moreover, the category $\Xx$ is pointed, then it is immediate to see that the conservativeness of all pullback functors $\alpha^* \colon \Pt_Y(\Xx) \to \Pt_A(\Xx)$ is implied by (and hence equivalent to) the conservativeness of the pullback functors induced by the morphisms whose domain is the zero object. This last property is equivalent to the classical Split Short Five Lemma (see, for example, Proposition $3.1.2$ in \cite{BB}). Hence, for pointed categories, \emph{protomodularity is equivalent to the validity of the Split Short Five Lemma}. \\

Our goal now is to replace the protomodularity condition with a lax version of it, where we look at the conservativeness of the comma object functors. We can consider a ``vertical'' version of Theorem~\ref{characterization of protomodularity} where the $\alpha^*$ are replaced by $V_{\alpha}$ and the points are vertical arrows as in diagrams \eqref{diag:ii} and \eqref{diag:iii}. Or we can consider the ``horizontal'' case with $H_{\alpha}$ and where the points appear horizontally in those diagrams. Despite this distinction, the equivalences obtained in $\C$ in one direction give immediately the corresponding results in the other direction when applied to $\C^\co$.

We chose vertical as our ``priority case'' with detailed proofs and simply state the equivalences for the horizontal case. Note that an $\Ord$-enriched category may fulfill the equivalent vertical properties and fail to fulfill the horizontal ones, or vice-versa. We propose the names \emph{lax protomodular category} for an $\Ord$-enriched finitely complete category $\C$ with comma objects and $2$-pullbacks such that the above mentioned equivalences hold in the vertical direction and \emph{colax protomodular category} with respect
to the horizontal direction (see Definition~\ref{Ord-enriched protomodularity}). \\

Replacing pullbacks with comma objects to get similar equivalences as those in Theorem~\ref{characterization of protomodularity} is not straightforward. Properties on pullbacks do not give similar properties on comma objects; e.g. gluing comma objects together does not give a comma object in general. However, there are some well-known properties combining comma objects and 2-pullbacks, whose proof can be found, for instance, in \cite{Vassilis}:
\begin{lemma}
\label{2 comma. 12 comma <=> 1 pb}
Let $\C$ be a finitely complete $\Ord$-enriched category which admits comma objects. Consider the diagram where the right square is a comma object and the left square is commutative
$$
	\xymatrix{P \ar[d]_-{p_1} \ar[r]^-{p_2} & f/g \ar[d]_-{\pi_1} \ar[r]^-{\pi_2} \ar@{}[dr]|-{\preccurlyeq} & Z \ar[d]^-g \\
						X' \ar[r]_-x & X \ar[r]_-f &  Y.}
$$
The outer rectangle is a comma object if and only if the left square is a $2$-pullback.
\end{lemma}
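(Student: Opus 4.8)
The statement is a two-dimensional pasting lemma, and the plan is to prove both implications directly from the universal properties, using throughout that a $2$-pullback of a pair is exactly an ordinary pullback whose comparison morphism into the $2$-product is fully faithful (the equality-version of (C1)--(C3)), and that the comma comparison $\langle \pi_1,\pi_2\rangle\colon f/g\to X\times Z$ is a monomorphism --- the latter being immediate from the uniqueness clause in (C2). I would set up the outer rectangle as the candidate comma object of the pair $(fx,g)$, with projections $p_1\colon P\to X'$ and $\pi_2 p_2\colon P\to Z$; condition (C1) for it, namely $fxp_1\preccurlyeq g\pi_2p_2$, is then immediate since $xp_1=\pi_1p_2$ (the left square commutes strictly) and $f\pi_1\preccurlyeq g\pi_2$.

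For the implication ``left square a $2$-pullback $\Rightarrow$ outer rectangle a comma object'', I would verify (C2) by composing the two universal properties: given $\alpha\colon A\to X'$, $\beta\colon A\to Z$ with $fx\alpha\preccurlyeq g\beta$, the comma object $f/g$ produces a unique $\mu\colon A\to f/g$ with $\pi_1\mu=x\alpha$ and $\pi_2\mu=\beta$; since $x\alpha=\pi_1\mu$, the $2$-pullback produces a unique $\lambda\colon A\to P$ with $p_1\lambda=\alpha$ and $p_2\lambda=\mu$, whence $\pi_2p_2\lambda=\beta$. Uniqueness of $\lambda$ follows by running this argument backwards and invoking uniqueness in each factor. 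For the order condition I would show $\langle p_1,\pi_2p_2\rangle$ is fully faithful: from $p_1\lambda\preccurlyeq p_1\lambda'$ and $\pi_2p_2\lambda\preccurlyeq\pi_2p_2\lambda'$ one gets $\pi_1p_2\lambda=xp_1\lambda\preccurlyeq xp_1\lambda'=\pi_1p_2\lambda'$, so full faithfulness of $\langle\pi_1,\pi_2\rangle$ gives $p_2\lambda\preccurlyeq p_2\lambda'$, and then full faithfulness of $\langle p_1,p_2\rangle$ (the $2$-pullback order property) gives $\lambda\preccurlyeq\lambda'$.

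For the converse ``outer rectangle a comma object $\Rightarrow$ left square a $2$-pullback'', I would recover the pullback universal property: given $\alpha\colon A\to X'$ and $\mu\colon A\to f/g$ with $x\alpha=\pi_1\mu$, set $\beta=\pi_2\mu$; then $fx\alpha=f\pi_1\mu\preccurlyeq g\pi_2\mu=g\beta$, so the outer comma object yields a unique $\lambda$ with $p_1\lambda=\alpha$ and $\pi_2p_2\lambda=\beta$. The key step is to check $p_2\lambda=\mu$: both morphisms have the same composite with $\pi_1$ (namely $x\alpha$, using $xp_1=\pi_1p_2$) and the same composite with $\pi_2$ (namely $\beta$), so they agree because $\langle\pi_1,\pi_2\rangle$ is monic. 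Uniqueness of $\lambda$ and the order property then transfer from the outer comma object, since full faithfulness of $\langle p_1,\pi_2p_2\rangle$ immediately yields that of $\langle p_1,p_2\rangle$.

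I expect the main obstacle to be the bookkeeping of the order-theoretic conditions (C3), phrased as full faithfulness of the comparison maps, rather than any conceptual difficulty: one must carefully distinguish the strict commutativity of the left square from the lax commutativity of the right, and in the converse direction the identification $p_2\lambda=\mu$ genuinely needs the comma comparison to be monic, which is exactly where uniqueness in (C2) enters. The $1$-dimensional parts are the usual pullback pasting lemma; the only new content is propagating $\preccurlyeq$ through the two comparison morphisms.
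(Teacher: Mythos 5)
Your proof is correct, but note that the paper does not actually prove this lemma: it is stated as one of the ``well-known properties combining comma objects and 2-pullbacks'' with the proof delegated to the reference \cite{Vassilis}, so there is no in-text argument to compare against. Your direct verification supplies exactly the standard pasting argument that the citation stands for, and all the essential steps are in place: (C1) and (C2) for the outer rectangle follow by composing the two one-dimensional universal properties; the identification $p_2\lambda=\mu$ in the converse direction correctly rests on $\pi_1,\pi_2$ being jointly monic, which as you say is immediate from the uniqueness clause in (C2); and the order conditions are propagated through the two jointly fully faithful projection pairs in the right order (first $(\pi_1,\pi_2)$, then $(p_1,p_2)$ in one direction, and conversely in the other).

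One small point of rigour: your opening gloss of a $2$-pullback as ``an ordinary pullback whose comparison morphism into the $2$-product is fully faithful'' presupposes that $X'\times (f/g)$ is a $2$-product. The paper is careful here, remarking that (C3) is equivalent to full faithfulness of $\la c_1,c_2\ra\colon C\to X\times Z$ only \emph{if} $\C$ admits $2$-products, which mere finite ($1$-)completeness does not guarantee. This costs you nothing, because your actual arguments never use the product: everywhere you work with the pairwise formulation (from $p_1\lambda\preccurlyeq p_1\lambda'$ and $\pi_2p_2\lambda\preccurlyeq\pi_2p_2\lambda'$ deduce $p_2\lambda\preccurlyeq p_2\lambda'$ and then $\lambda\preccurlyeq\lambda'$), i.e.\ with \emph{joint} full faithfulness of the projection pairs, which is exactly condition (C3) and its strict analogue for $2$-pullbacks. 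If you rephrase the preamble to speak of jointly monic and jointly fully faithful projections instead of comparison morphisms into products, the proof is airtight as written.
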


A similar result holds by stacking squares vertically:
\begin{lemma}
\label{2 comma. 12 comma <=> 1 pb, vertical version}
Let $\C$ be a finitely complete $\Ord$-enriched category which admits comma objects. Consider the diagram where the bottom square is a comma object and the top square is commutative
$$
	\xymatrix{P \ar[d]_-{p_1} \ar[r]^-{p_2} & Z' \ar[d]^-{z} \\
						f/g \ar[d]_-{\pi_1} \ar[r]^-{\pi_2} \ar@{}[dr]|-{\preccurlyeq} & Z \ar[d]^-g \\
						X \ar[r]_-f &  Y.}
$$
The outer rectangle is a comma object if and only if the top square is a $2$-pullback.
\end{lemma}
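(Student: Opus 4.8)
The plan is to deduce this vertical statement from the horizontal Lemma~\ref{2 comma. 12 comma <=> 1 pb} by passing to $\C^\co$, rather than redoing the argument from scratch. Three observations make $\C^\co$ the right setting. First, $\C^\co$ is again a finitely complete $\Ord$-enriched category with comma objects, so Lemma~\ref{2 comma. 12 comma <=> 1 pb} is available there verbatim. Second, reversing the preorder turns the defining inequality $f\pi_1\preccurlyeq g\pi_2$ of the comma object $f/g$ into $g\pi_2\preccurlyeq f\pi_1$; hence $f/g$, with its two projections interchanged, is precisely the comma object of the pair $(g,f)$ computed in $\C^\co$. Third, a $2$-pullback is self-dual: it is a comma object whose comparison inequality is an equality, and equalities (as well as strictly commutative squares) are insensitive both to reversing the order and to interchanging the two legs.

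Armed with this dictionary, I would reflect the given diagram across its main diagonal, turning the vertical stack into a horizontal one. The bottom comma square, read in $\C^\co$, becomes the right-hand square of the shape in Lemma~\ref{2 comma. 12 comma <=> 1 pb}, now exhibiting $f/g$ as the comma object of $(g,f)$ with down-leg $\pi_2$ and right-leg $\pi_1$; the top commutative square of our diagram becomes the left-hand commutative square; and the outer rectangle becomes the outer rectangle of the horizontal picture, which is the comma object of $(gz,f)$ in $\C^\co$ exactly when it is the comma object of $(f,gz)$ in $\C$. Under the same reflection the top square of our diagram is carried to the left square of the horizontal picture, and by self-duality it is a $2$-pullback in $\C$ iff its image is a $2$-pullback in $\C^\co$. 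Lemma~\ref{2 comma. 12 comma <=> 1 pb} applied in $\C^\co$ then reads off exactly as the desired equivalence.

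If one prefers to avoid the bookkeeping of the reflection, the statement also admits a direct proof mirroring that of Lemma~\ref{2 comma. 12 comma <=> 1 pb}, verifying conditions (C1)--(C3) for the outer rectangle with projections $\pi_1 p_1\colon P\to X$ and $p_2\colon P\to Z'$. Condition (C1) is immediate: $f\pi_1 p_1\preccurlyeq g\pi_2 p_1=gz p_2$, using the bottom comma inequality and $\pi_2 p_1=z p_2$. For (C2), a pair $(a\colon A\to X,\,b\colon A\to Z')$ with $fa\preccurlyeq gzb$ first induces a unique $\mu\colon A\to f/g$ with $\pi_1\mu=a$ and $\pi_2\mu=zb$ by the universal property of the bottom square, and then, since $(\mu,b)$ is a strictly commuting cone over $(\pi_2,z)$, a unique $\lambda\colon A\to P$ through the top $2$-pullback; monotonicity of $z$ together with the order condition (C3) of $f/g$ and of $P$ propagate to give (C3) for the rectangle.

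The heart of the matter is the converse, where one assumes the rectangle is a comma object and must recognise the top square as a $2$-pullback. Given a strictly commuting cone $(u\colon A\to f/g,\,v\colon A\to Z')$ over $(\pi_2,z)$, one sets $a=\pi_1 u$ and uses $f\pi_1 u\preccurlyeq g\pi_2 u=gzv$ to induce $\lambda\colon A\to P$ from the rectangle's comma property. The delicate point --- exactly where joint monicity of the comma projections is needed --- is upgrading the identity $\pi_1 p_1\lambda=\pi_1 u$ to $p_1\lambda=u$: both $p_1\lambda$ and $u$ have the same composites with $\pi_1$ and with $\pi_2$ (for the latter one uses $\pi_2 p_1\lambda=z p_2\lambda=zv=\pi_2 u$), so the $1$-dimensional universal property of $f/g$ forces them to coincide. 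Uniqueness of $\lambda$ and the order universal property then follow formally from those of the rectangle, completing the identification of the top square as a $2$-pullback. I expect this joint-monicity step, and in the duality route the correct matching of orientations and the interchange of the two legs, to be the only genuinely delicate points.
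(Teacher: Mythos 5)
Your proposal is correct, and it in fact supplies more than the paper does: the paper proves neither lemma, citing \cite{Vassilis} for the horizontal version (Lemma~\ref{2 comma. 12 comma <=> 1 pb}) and dismissing the vertical one with ``a similar result holds by stacking squares vertically''. Your primary route --- transposing the diagram and applying the horizontal lemma in $\C^\co$ --- is a genuinely different, and cleaner, derivation than the implied ``redo the same argument''. The dictionary you set up is accurate on all three delicate points: $\C^\co$ inherits finite completeness and comma objects (the comma object of $(u,v)$ in $\C^\co$ being that of $(v,u)$ in $\C$ with legs interchanged); the bottom comma square, read in $\C^\co$, is exactly the comma object of $(g,f)$ since reversing the enrichment turns $f\pi_1\preccurlyeq g\pi_2$ into the required co-inequality, while condition (C3) is invariant under swapping the primed and unprimed morphisms; and $2$-pullbacks are self-dual because strict commutativity is order-insensitive and full faithfulness of $\la p_1,p_2\ra$ is preserved both by reversing $\preccurlyeq$ and by composing with the product symmetry. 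Your orientation bookkeeping also checks out: the transposed outer rectangle is the comma object of $(gz,f)$ in $\C^\co$ precisely when the original outer rectangle is the comma object of $(f,gz)$ in $\C$. The fallback direct proof is likewise sound, and you correctly isolate the one non-formal step in the converse: upgrading $\pi_1 p_1\lambda=\pi_1 u$ to $p_1\lambda=u$ via joint monicity of $(\pi_1,\pi_2)$, which is exactly the uniqueness clause (C2) of the bottom comma object; what the duality route buys is that this verification is done once (in the cited horizontal lemma) rather than twice.
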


We now prove that when we replace the pullback functor with a comma object functor and the pullbacks with comma objects we obtain enriched versions of the  equivalence Theorem~\ref{characterization of protomodularity} (i) $\Leftrightarrow$ (ii).

\begin{theorem}\label{V conservative iff |1|2|} Let $\C$ be a finitely complete $\Ord$-enriched category which admits comma objects and $2$-pullbacks. The following statements are equivalent:
\begin{itemize}
\item[(i)] $V_{\alpha}\colon \Pt_Y(\C)\to \Pt_A(\C)$ is conservative for any morphism $\alpha\colon A\to Y$.
\item[(ii)] In any diagram where the left square \caixa{1} and total rectangle \caixa{1}\caixa{2} are comma diagrams and the right square \caixa{2} is commutative
\begin{equation}\label{diag:comma}
	\xymatrix@R=30pt@C=40pt{\alpha/f \ar@{}[dr]|-{\preccurlyeq} \ar@<-2pt>[d]_-{\pi_1} \ar[r]^-{\pi_2} &
													X \ar@<-2pt>[d]_-{f} \ar[r]^-{\chi}  & 	U \ar@<-2pt>[d]_-g \\
													A \ar@<-2pt>[u]_-{i_1} \ar[r]_-{\alpha}^-{\mbox{\tiny \caixa{1}}} &
													Y \ar@<-2pt>[u]_-s \ar[r]_-{\beta}^-{\mbox{\caixa{2}}} &  V, \ar@<-2pt>[u]_-t}
\end{equation}
\begin{center}\tiny
		($\alpha\pi_1\preccurlyeq f\pi_2$, $i_1\stackrel{\mathrm{(1.ii)}}{=}\la 1_A,s\alpha\ra$, $\;fs=1_Y$, $\;gt=1_V$, $\;\beta f=g\chi$, $\;\chi s=t\beta$),
\end{center}
the right square \caixa{2} is a $2$-pullback.
\end{itemize}
\end{theorem}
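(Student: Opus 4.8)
The plan is to transcribe the classical proof of Theorem~\ref{characterization of protomodularity}(i)$\Leftrightarrow$(ii), replacing pullbacks by comma objects and $2$-pullbacks and the pullback pasting law by Lemmas~\ref{2 comma. 12 comma <=> 1 pb} and~\ref{2 comma. 12 comma <=> 1 pb, vertical version}. In both directions one compares a point in $\Pt_Y(\C)$ with a base-changed point and feeds the comparison to the conservative functor $V_\alpha$; the subtlety is entirely in controlling what $V_\alpha$ does to that comparison.

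For (ii)$\To$(i), I would argue directly. Let $\psi\colon(f,s)\to(f',s')$ be a morphism in $\Pt_Y(\C)$ with $V_\alpha(\psi)$ invertible; I must show $\psi$ is invertible. I paste the comma square \caixa{1}$=\alpha/f$ on the right with the square \caixa{2} given by $\psi$ sitting over $1_Y$ (so $\beta=1_Y$, $g=f'$, $t=s'$, $\chi=\psi$), which commutes since $f=f'\psi$ and $\psi s=s'$. The outer rectangle \caixa{1}\caixa{2} is the comma object of $(\alpha,f')$ precisely when the canonical comparison $\alpha/f\to\alpha/f'$, namely $V_\alpha(\psi)$, is an isomorphism, which is the hypothesis. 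Thus the diagram meets the assumptions of (ii), so \caixa{2} is a $2$-pullback; but the $2$-pullback of $(f',1_Y)$ is $X'$ itself, whence $\psi$ is an isomorphism and $V_\alpha$ is conservative.

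For (i)$\To$(ii), starting from diagram~\eqref{diag:comma} I would form the $2$-pullback $P$ of $(\beta,g)$ together with the induced comparison $\phi\colon X\to P$, which is a morphism in $\Pt_Y(\C)$, so that \caixa{2} is a $2$-pullback exactly when $\phi$ is invertible; by (i) it suffices to make $V_\alpha(\phi)$ invertible. Applying Lemma~\ref{2 comma. 12 comma <=> 1 pb} to the outer comma \caixa{1}\caixa{2}, with the comma object $\beta/g$ inserted on the right, records that the resulting left square $\alpha/f\to\beta/g$ over $\alpha$ is a $2$-pullback; this is the comma incarnation of the pasting identity and expresses the compatibility of \caixa{1} with \caixa{1}\caixa{2}. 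The intended endgame is the passage from ``the comparison is invertible'' to ``\caixa{2} is a $2$-pullback'', which I would obtain from the remark that a comma object whose defining $2$-cell is an identity is a $2$-pullback, using the strict commutativity $\beta f=g\chi$ of \caixa{2}.

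The hard part will be exactly the verification that $V_\alpha$ sends the comparison to an isomorphism, and here the naive transcription fails. Applying the vertical comma functor to the $2$-pullback point $P$ yields $\alpha/f\,{\to}\,\alpha/p_1$, whose generalized elements $(a,y,u)$ carry a free ``$Y$-coordinate'' $y$ (constrained only by $\beta y=gu$ and $\alpha a\preccurlyeq y$), whereas \caixa{1}\caixa{2} only identifies $\alpha/f$ with $(\beta\alpha)/g$, whose elements are pairs $(a,u)$. Consequently $V_\alpha(\phi)$ is merely a split monomorphism, a section of the forgetful map $\alpha/p_1\to(\beta\alpha)/g$, and not yet an isomorphism, so conservativeness cannot be invoked as it stands. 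Overcoming this is where the two comma hypotheses must be used jointly: I expect to reorganize the comparison through Lemmas~\ref{2 comma. 12 comma <=> 1 pb} and~\ref{2 comma. 12 comma <=> 1 pb, vertical version} so that the free coordinate is pinned down by the $2$-pullback square coming from \caixa{1}\caixa{2}, and only then apply (i). This pinning, together with the comma-to-$2$-pullback upgrade supplied by the identity $2$-cell on \caixa{2}, is the crux on which the argument stands or falls.
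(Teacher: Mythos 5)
Your (ii) $\To$ (i) direction is correct and coincides with the paper's own argument: you paste the comma square for $(\alpha,f)$ with the square carrying $\psi$ over $1_Y$, observe via Lemma~\ref{2 comma. 12 comma <=> 1 pb} that the outer rectangle is a comma diagram exactly because the left comparison square built from the isomorphism $V_{\alpha}(\psi)$ is a $2$-pullback, and then (ii) forces the $\psi$-square, sitting over $1_Y$, to be a $2$-pullback, i.e.\ $\psi$ is invertible. The forward direction, however, has a genuine gap. You set up precisely the paper's comparison $\la f,\chi\ra\colon X\to P$ into the $2$-pullback $P$ of $(\beta,g)$ and correctly reduce (ii) to the invertibility of $V_{\alpha}(\la f,\chi\ra)$, but the verification of that invertibility --- which you yourself call ``the crux on which the argument stands or falls'' --- is never carried out: you only record the expectation that Lemmas~\ref{2 comma. 12 comma <=> 1 pb} and~\ref{2 comma. 12 comma <=> 1 pb, vertical version} will somehow pin the free $Y$-coordinate. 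As it stands, the theorem is not proved.

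Moreover, your interim diagnosis is wrong in an instructive way: under the full hypotheses $V_{\alpha}(\la f,\chi\ra)$ is \emph{not} merely a split monomorphism, it is an isomorphism (the paper shows it equals $1_{\alpha/f}$ after identifying $\alpha/g'$ with $\alpha/f$, where $g'\colon P\to Y$). Your element count --- triples $(a,y,u)$ with $\alpha a\preccurlyeq y$, $\beta y=gu$, against pairs $(a,u)$ with $\beta\alpha a\preccurlyeq gu$ --- ignores the section data $s$, $t$ and the compatibility $\chi s=t\beta$, and it is exactly these that kill the freedom in $y$. Concretely, the paper constructs $\sigma\colon\alpha/1_Y\to\alpha/f$ and $\varphi\colon\alpha/f\to\alpha/1_Y$ with $\varphi\sigma=1$, and then proves the one genuinely new fact: $(\rho_1,\beta\rho_2)$ exhibits $\alpha/1_Y$ as the comma object of $(\beta\alpha,1_V)$. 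The uniqueness clause there is the pinning you were looking for: if $\xi$ satisfies $\rho_1\xi=m$ and $\beta\rho_2\xi=n$, then $\chi\pi_2\sigma\xi=\chi s\rho_2\xi=t\beta\rho_2\xi=tn$, so $\sigma\xi$ is the unique morphism into the total comma \caixa{1}\caixa{2} determined by the data $(m,tn)$, whence $\xi=\varphi\sigma\xi$ is forced; the would-be free $Y$-datum is recovered from its own $s$-translate. With this comma object in hand, Lemma~\ref{2 comma. 12 comma <=> 1 pb, vertical version} shows the rectangle pasted from $\varphi$, $f$, $g$ is a $2$-pullback; cancelling against the $2$-pullback $P$ shows the quadrangle $\alpha/f\to P$ over $\rho_2$ is a $2$-pullback; and Lemma~\ref{2 comma. 12 comma <=> 1 pb, vertical version} once more, stacked on the comma of $(\alpha,1_Y)$, yields that $(\pi_1,\pi)$ presents $\alpha/f$ as $\alpha/g'$, i.e.\ $V_{\alpha}(\la f,\chi\ra)=1_{\alpha/f}$ --- only then does conservativeness apply. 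None of this chain (neither the retraction pair $(\sigma,\varphi)$ nor the $(\beta\alpha)/1_V$ comma object) appears in your sketch, so what is missing is not a routine reorganization but the actual core of the proof.
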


\begin{proof} (i) $\Rightarrow$ (ii). In the following diagram
$$
	\xymatrix@R=30pt@C=40pt{\alpha/1_Y \ar[r]^-{\rho_2} \pullback \ar@{.>}[d]_-{\sigma} \ar@(l,l)[dd]_-{\rho_1} & Y \ar[d]^-s \ar@(r,r)@{=}[dd]^-{1_Y}\\
						\alpha/f \ar@{}[dr]|-{\preccurlyeq} \ar[r]_-{\pi_2}^-{\mbox{\caixa{3}}} \ar[d]_-{\pi_1} & X \ar[d]^-f \\
						A \ar[r]_-{\alpha}^-{\mbox{\caixa{1}}} & Y}
$$
we get an induced morphism $\sigma\colon \alpha/1_Y \to \alpha/f$ such that the bottom square and total rectangle are comma objects; thus the top square \caixa{3} is a $2$-pullback (Lemma~\ref{2 comma. 12 comma <=> 1 pb, vertical version}). Similarly, we get an induced morphism $\varphi\colon \alpha/f\to \alpha/1_Y$
$$
	\xymatrix@R=30pt@C=40pt{\alpha/f \ar[r]^-{\pi_2} \pullback \ar@{.>}[d]_-{\varphi} \ar@(l,l)[dd]_-{\pi_1} & X \ar[d]^-f \ar@(r,r)[dd]^-{f}\\
						\alpha/1_Y \ar@{}[dr]|-{\preccurlyeq} \ar[r]_-{\rho_2}^-{\mbox{\caixa{5}}} \ar[d]_-{\rho_1} & Y \ar@{=}[d]^-{1_Y} \\
						A \ar[r]_-{\alpha}^-{\mbox{\caixa{4}}} & Y}
$$
such that \caixa{5} is a $2$-pullback. It is easy to check that $\varphi\sigma=1_{\alpha/1_Y}$.

Next we prove that the following diagram
$$
	\xymatrix@R=30pt@C=40pt{ \alpha/1_Y \ar[r]^-{\rho_2} \ar[d]_-{\rho_1} \ar@{}[dr]|-{\preccurlyeq} & Y\ar[r]^-{\beta} \ar@{=}[d]^-{1_Y}  & V \ar@{=}[d]^-{1_V} \\
							A \ar[r]_-{\alpha}^-{\mbox{\caixa{4}}} & Y \ar[r]_-{\beta}^-{\mbox{\caixa{6}}} & V}		
$$
is a comma object diagram. Indeed:
\begin{itemize}
	\item[(C1)] $\beta\alpha\rho_1\preccurlyeq \beta\rho_2$;
	\item[(C2)] if $m\colon M\to A$ and $n\colon M\to V$ are such that $\beta\alpha m \preccurlyeq n$, then $\beta\alpha m \preccurlyeq gtn$. We get
	an induced morphism $\lambda\colon M\to \alpha/f$
	$$
	\xymatrix@R=30pt@C=40pt{M \ar@{.>}[dr]^-{\lambda} \ar@(d,l)[ddr]_-m \ar@(r,u)[drrr]^-{tn} \\
						& \alpha/f \ar@{}[dr]|-{\preccurlyeq} \ar[r]^-{\pi_2} \ar@<-2pt>[d]_-{\pi_1} & X \ar@<-2pt>[d]_-f \ar[r]^-{\chi} &
						U \ar@<-2pt>[d]_-g\\
						& A \ar@<-2pt>[u] \ar[r]_-{\alpha}^-{\mbox{\caixa{1}}} & Y \ar[r]_-{\beta}^-{\mbox{\caixa{2}}} \ar@<-2pt>[u]_-s & V \ar@<-2pt>[u]_-t }
	$$
	such that $\pi_1\lambda=m$ and $\chi\pi_2\lambda=tn$. We consider the morphism $\varphi\lambda\colon M\to \alpha/1_Y$ which is such that $\rho_1\varphi\lambda=\pi_1\lambda=m$ and $\beta\rho_2\varphi\lambda=\beta f\pi_2 \lambda=g\chi\pi_2\lambda=gtn=n$. To prove the uniqueness of such a morphism, suppose that $\xi\colon M\to \alpha/1_Y$ is such that $\rho_1\xi=m$ and $\beta\rho_2\xi=n$. Then $t\beta\rho_2\xi=tn$ or, equivalently, $\chi s\rho_2\xi=tn$, from which we get $\chi\pi_2\sigma\xi=tn$. Since also, $\pi_1\sigma\xi=\pi_1\xi=m$, we conclude that $\lambda=\sigma\xi$, from the universal property of the comma object of $(\beta\alpha,g)$. It follows that $\varphi\lambda=\varphi\sigma\xi=\xi$;
	\item[(C3)] consider morphisms $c,c'\colon B\to A, d,d'\colon B\to V$ such that $\beta\alpha c\prec d$, $\beta\alpha c'\prec d'$, $c\prec c'$ and $d\prec d'$. Let $b=\la c,d\ra,b'=\la c',d'\ra$ be the induced morphisms from $B$ to $\alpha/1_Y$. With respect to the comma object diagram \caixa{1}\caixa{2}, we have morphisms $c,c',td,td'$ such that $\beta\alpha c\prec gtd$, $\beta\alpha c'\prec gtd'$, $c\prec c'$ and $td\prec td'$. Thus the induced morphisms $\sigma b,\sigma b'\colon B\to \alpha/f$ are such that $\sigma b\prec \sigma b'$. Then $b=\varphi\sigma b \preccurlyeq \varphi \sigma b'=b'$.
\end{itemize}

In the following diagram
$$
	\xymatrix@R=30pt@C=40pt{\alpha/f \ophalfsplitpullback \ar[r]^-{\pi_2} \ar@<-2pt>[d]_-{\varphi} \ar@(l,l)[dd]_-{\pi_1} &
													X \ar[r]^-{\chi} \ar@<-2pt>[d]_-f &
													U \ar@<-2pt>[d]_-g\\
													\alpha/1_Y \ar[r]_-{\rho_2}^-{\mbox{\caixa{5}}} \ar[d]_-{\rho_1} \ar@<-2pt>[u]_-{\sigma} &
													Y \ar@{=}[d] \ar[r]_-{\beta}^-{\mbox{\caixa{2}}} \ar@<-2pt>[u]_-s &
													V \ar@{=}[d] \ar@<-2pt>[u]_-t \\
													A \ar[r]_-{\alpha}^-{\mbox{\caixa{4}}} & Y \ar[r]_-{\beta}^-{\mbox{\caixa{6}}} & V,}
$$
the bottom rectangle and the total diagram are comma objects, so that the top rectangle is a $2$-pullback (Lemma~\ref{2 comma. 12 comma <=> 1 pb, vertical version}).

Next, we take the $2$-pullback of $\beta$ and $g$ to get the following diagram
$$
	\xymatrix@R=30pt@C=50pt{\alpha/f \ar[rr]^-{\pi_2} \ar@<-2pt>[dd]_-{\varphi} \ar@{.>}[drrr]^-{\pi} & &
													X \ar[rr]^-{\chi} \ar@<-2pt>[dd]_-{f} \ar@{.>}[dr]^-{\la f,\chi\ra} & &
													\ar@<-2pt>[dd]_-g U \\
													& & & P\ar[ur]^-{\beta'} \ar@<-2pt>[dl]_-{g'}\\
													\alpha/1_Y \ar[rr]_-{\rho_2} \ar@<-2pt>[uu]_-{\sigma} & &
													Y \ar[rr]_-{\beta} \ar@<-2pt>[uu]_-s  \ar@<-2pt>[ur]_-{t'} & &
													V. \ar@<-2pt>[uu]_-t
	}
$$
Since the whole rectangle is a $2$-pullback, the bottom left quadrangle is a $2$-pullback; let us call it \caixa{7}. The diagram composed by \caixa{4} on the bottom and \caixa{7} on top is a comma object diagram (Lemma~\ref{2 comma. 12 comma <=> 1 pb, vertical version}). This is the front face in
$$
\xymatrix@C=40pt@R=40pt{\alpha/f \ar@<-2pt>[ddr]_-{\pi_1} \ar[rr]^-{\pi_2}
								\ar@{->}[dr]^(.6){V_{\alpha}(\la f,\chi\ra)=1_{\alpha/f}}|-{\cong} & &
								X \ar@<-6pt>[ddr]_(.3){f} \ar[dr]^-{\la f,\chi\ra} \\
								& \alpha/f \ar@<-2pt>[d]_(.4){\pi_1} \ar@<2pt>[rr]^(.4){\pi} & &
								\;P\; \ar@<-2pt>[d]_(.4){g'} \\
								& A \ar@<-2pt>[u]_(0.6){i_1} \ar@<-2pt>[uul]_-{i_1} \ar[rr]_-{\alpha} & &
								Y; \ar@<2pt>[uul]_(.65)s \ar@<-2pt>[u]_(.6){t'}}
$$
the left point being $(\rho_1\varphi, \sigma\la 1_A, \alpha\ra) = (\pi_1,i_1)$. Note that, $V_{\alpha}(\la f,\chi\ra)$ $=1_{\alpha/f}$, so that $\la f,\chi\ra$ is an isomorphism by assumption. This proves that \caixa{2} is indeed a $2$-pullback.\\
(ii) $\Rightarrow$ (i). Consider an arbitrary morphism $\alpha\colon A\to Y$ and a morphism $\gamma\colon (f,s) \to (g,t)$ in $\Pt_Y(\C)$. Suppose that $V_{\alpha}(\gamma)$ is an isomorphism
$$
\xymatrix@C=40pt@R=40pt{\alpha/f \ar@<-2pt>[ddr]_-{\pi_1} \ar[rr]^-{\pi_2} \ar@{->}[dr]^(.6){V_{\alpha}(\gamma)}|-{\cong} & &
								X \ar@<-4pt>[ddr]_(.3){f} \ar@<2pt>[dr]^-{\gamma} \\
								& \alpha/g \ar@<-2pt>[d]_(.4){\rho_1} \ar[rr]^(.4){\rho_2} & & U \ar@<-2pt>[d]_(.4){g} \\
								& A \ar[rr]_-{\alpha} \ar@<-2pt>[u]_(.6){j_1} \ar@<-2pt>[uul]_-{i_1} & & Y; \ar[uul]_(.65)s \ar@<-2pt>[u]_(.6)t}
$$
here $i_1=\la 1_A,s\alpha\ra$ and $j_1=\la 1_A, t\alpha\ra$. The diagram
$$
	\xymatrix@R=30pt@C=40pt{\alpha/f \ar@{}[dr]|-{\preccurlyeq} \ar@<-2pt>[d]_-{\pi_1} \ar[r]^-{\pi_2} &
													X \ar@<-2pt>[d]_-{f} \ar[r]^-{\gamma}  \ar@{}[dr]|-{\mbox{\caixa{*}}}&
													U \ar@<-2pt>[d]_-g \\
													A \ar@<-2pt>[u]_-{i_1} \ar[r]_-{\alpha} & Y \ar@<-2pt>[u]_-s \ar@{=}[r]_-{1_Y} &  Y \ar@<-2pt>[u]_-t}
$$
is of the type \eqref{diag:comma}. The whole rectangle is a comma object diagram because it is the same as
$$
	\xymatrix@R=30pt@C=40pt{\alpha/f  \pullback \ar[d]_-{\pi_1} \ar[r]^-{V_{\alpha}(\gamma)}|-{\cong} &
													\alpha/g \ar@{}[dr]|-{\preccurlyeq} \ar[d]_-{\rho_1} \ar[r]^-{\rho_2} &
													U \ar[d]^-g \\
													A \ar@{=}[r]_-{1_A} & A \ar[r]_-{\alpha} &  Y,}
$$
where the left commutative square is obviously a 2-pullback (see Lemma~\ref{2 comma. 12 comma <=> 1 pb}). By assumption \caixa{*} is a $2$-pullback; thus $\gamma$ is an isomorphism.
\end{proof}

The horizontal version of the previous result is stated next.

\begin{corollary}\label{H conservative iff |1|2|} Let $\C$ be a finitely complete $\Ord$-enriched category which admits comma objects and $2$-pullbacks. The following statements are equivalent:
\begin{itemize}
\item[(i)] $H_{\alpha}\colon \Pt_Y(\C)\to \Pt_A(\C)$ is conservative for any morphism $\alpha\colon A\to Y$.
\item[(ii)] In any diagram where the top square \caixa{1} and total rectangle are comma diagrams and the bottom square \caixa{2} is commutative
$$
	\xymatrix@R=30pt@C=40pt{f/\alpha \ar@{}[dr]|-{\preccurlyeq} \ar@<2pt>[r]^-{\pi_2} \ar[d]_-{\pi_1} &
													A \ar@<2pt>[l]^-{i_2} \ar[d]^-{\alpha}_-{\mbox{\tiny \caixa{1}}}\\
													X \ar@<2pt>[r]^-f \ar[d]_-{\chi} & Y \ar[d]^-{\beta}_-{\mbox{\tiny \caixa{2}}} \ar@<2pt>[l]^-s  \\
													U \ar@<2pt>[r]^-g  &  V, \ar@<2pt>[l]^-t}
$$
\begin{center}\tiny
		($f\pi_1\preccurlyeq \alpha\pi_2$, $i_2\stackrel{\mathrm{(1.iii)}}{=}\la s\alpha,1_A\ra$, $\;fs=1_Y$, $\;gt=1_V$, $\;\beta f=g\chi$, $\;\chi s=t\beta$),
\end{center}
the bottom square \caixa{2} is a 2-pullback.
\end{itemize}
\end{corollary}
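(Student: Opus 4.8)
The plan is to deduce this horizontal statement directly from Theorem~\ref{V conservative iff |1|2|} by applying the latter to the $\co$-dual category $\C^\co$, exactly as anticipated in the discussion preceding the theorem. First I would record the relevant ``dictionary'' between $\C$ and $\C^\co$. Since $\C^\co$ has the same underlying $1$-cells as $\C$ and only reverses the enrichment $\preccurlyeq$, it is again a finitely complete $\Ord$-enriched category admitting comma objects and $2$-pullbacks, and the categories of points $\Pt_Y(\C)$ and $\Pt_Y(\C^\co)$ literally coincide (being defined by split epimorphisms, a purely $1$-categorical notion). The crucial entry is that the comma object of an ordered pair $(\alpha,f)$ computed in $\C^\co$ is precisely the comma object $f/\alpha$ computed in $\C$, with the two projections interchanged: indeed $\alpha p_1\preccurlyeq^{\co} f p_2$ in $\C^\co$ reads $f p_2\preccurlyeq \alpha p_1$ in $\C$, so setting $p_1=\pi_2$, $p_2=\pi_1$ matches condition (C1) for $f/\alpha$, while (C2) transfers verbatim and (C3) transfers because full faithfulness of $\la c_1,c_2\ra$ is self-dual. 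Likewise a strictly commuting square is a $2$-pullback in $\C^\co$ if and only if it is one in $\C$.

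With this dictionary in hand, the key identification is that the vertical functor $V_{\alpha}$ computed in $\C^\co$ coincides, object- and morphism-wise, with the horizontal functor $H_{\alpha}$ computed in $\C$. On a point $(f,s)$, the functor $V_{\alpha}$ in $\C^\co$ returns the projection $\alpha/f\to A$ together with the section induced by $(1_A,s\alpha)$; translating through the dictionary, the projection becomes $\pi_2\colon f/\alpha\to A$ and the induced section becomes $\la s\alpha,1_A\ra=i_2$, which is exactly $H_{\alpha}(f,s)$ as in diagram (1.iii). Since isomorphisms in $\Pt_A(\C)$ and $\Pt_A(\C^\co)$ coincide, $V_{\alpha}$ is conservative in $\C^\co$ if and only if $H_{\alpha}$ is conservative in $\C$; thus condition (i) of the corollary is equivalent to condition (i) of Theorem~\ref{V conservative iff |1|2|} read in $\C^\co$.

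It then remains to check that condition (ii) of the theorem, interpreted in $\C^\co$, is exactly condition (ii) of the corollary. Here I would observe that the corollary's diagram is the transpose (reflection across the main diagonal) of the diagram \eqref{diag:comma}: the vertical points of \eqref{diag:comma} become the horizontal points of the corollary, the comma square $\alpha/f$ and the total rectangle in $\C^\co$ become the comma square $f/\alpha$ and the total rectangle in $\C$ via the dictionary, the commutativity relations $\beta f=g\chi$ and $\chi s=t\beta$ are symmetric under transposition, and the displayed inequality $\alpha\pi_1\preccurlyeq^{\co} f\pi_2$ becomes precisely $f\pi_1\preccurlyeq \alpha\pi_2$. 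Since the conclusion ``$\caixa{2}$ is a $2$-pullback'' is self-dual, the two conditions~(ii) coincide, and the corollary follows. I expect the only real work to be this last, purely bookkeeping, step of aligning the transposed diagrams and verifying that every label (the swapped projections, the sections $i_1$ versus $i_2$, and the direction of the displayed inequality) matches correctly under the $\co$-duality; there is no new categorical content beyond Theorem~\ref{V conservative iff |1|2|} itself.
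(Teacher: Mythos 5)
Your proposal is correct and is exactly the paper's intended argument: the paper offers no separate proof of Corollary~\ref{H conservative iff |1|2|}, deriving it, just as you do, by applying Theorem~\ref{V conservative iff |1|2|} to $\C^\co$, where the comma object of $(\alpha,f)$ becomes $f/\alpha$ with swapped projections, $V_{\alpha}$ becomes $H_{\alpha}$ (with section $i_2=\la s\alpha,1_A\ra$), and condition (ii) becomes the transposed diagram. Your explicit verification of the duality dictionary (self-duality of full faithfulness and of $2$-pullbacks, coincidence of the point categories) is precisely the bookkeeping the paper leaves implicit.
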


\begin{corollary}\label{proto => 2-proto} Let $\Xx$ be a protomodular category. Then any $\Ord$-enrichment $\X$ of $\Xx$ that admits comma objects and $2$-pullbacks fulfills the equivalent conditions of Theorem~\ref{V conservative iff |1|2|} and of Corollary~\ref{H conservative iff |1|2|}.
\end{corollary}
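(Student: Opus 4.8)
The plan is to verify condition (i) of Theorem~\ref{V conservative iff |1|2|}, namely the conservativeness of $V_\alpha$, and then quote the theorem to obtain (ii); the conditions of Corollary~\ref{H conservative iff |1|2|} follow by running the same argument in $\X^\co$, since $H_\alpha$ in $\X$ is $V_\alpha$ in $\X^\co$, and $\X^\co$ has the same (protomodular) underlying category $\Xx$ and again admits comma objects and $2$-pullbacks. The guiding idea is that, although $V_\alpha$ is built from comma objects, \emph{on underlying morphisms it coincides with an ordinary pullback functor of $\Xx$}; conservativeness then comes for free from the protomodularity of $\Xx$.

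Fix $\alpha\colon A\to Y$ and form $\alpha/1_Y$, with projections $\rho_1\colon\alpha/1_Y\to A$ and $\rho_2\colon\alpha/1_Y\to Y$. For a point $(f\colon X\to Y,s)$, the induced morphism $\varphi_f\colon\alpha/f\to\alpha/1_Y$ (with $\rho_1\varphi_f=\pi_1$ and $\rho_2\varphi_f=f\pi_2$) fits, exactly as in the first part of the proof of Theorem~\ref{V conservative iff |1|2|} and with no conservativeness assumption, into a vertical stack whose bottom square is the comma object $\alpha/1_Y$ and whose total rectangle is the comma object $\alpha/f$. Lemma~\ref{2 comma. 12 comma <=> 1 pb, vertical version} then forces the top square
\[
\xymatrix@R=22pt@C=30pt{\alpha/f \ar[r]^-{\pi_2} \ar[d]_-{\varphi_f} & X \ar[d]^-f \\ \alpha/1_Y \ar[r]_-{\rho_2} & Y}
\]
to be a $2$-pullback, hence a pullback in $\Xx$. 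Consequently $\alpha/f\cong\alpha/1_Y\times_Y X$, so the total object of $V_\alpha(f,s)$ agrees with that of the ordinary pullback $\rho_2^*(f,s)$ taken along $\rho_2\colon\alpha/1_Y\to Y$ in $\Xx$, with $\varphi_f$ and $\pi_2$ as the two pullback projections.

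Next I would identify the two functors on arrows. For a morphism $h\colon(f,s)\to(g,t)$ of points over $Y$ (so $gh=f$ and $hs=t$), the comparison $V_\alpha(h)\colon\alpha/f\to\alpha/g$ is determined by $\pi_1 V_\alpha(h)=\pi_1$ and $\pi_2 V_\alpha(h)=h\pi_2$. Using $gh=f$ together with the comma cell $\alpha\pi_1\preccurlyeq f\pi_2$, a short check against the jointly monic pair $(\rho_1,\rho_2)$ of $\alpha/1_Y$ gives $\varphi_g V_\alpha(h)=\varphi_f$, while $\pi_2 V_\alpha(h)=h\pi_2$ holds by definition; these are precisely the two equations defining the pullback comparison $\rho_2^*(h)$ with respect to the projections $\varphi_g$ and $\pi_2$ of $\alpha/g\cong\alpha/1_Y\times_Y U$. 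Hence $V_\alpha(h)=\rho_2^*(h)$ as morphisms of $\Xx$: the two functors land in points over different bases, $A$ versus $\alpha/1_Y$, but their underlying arrows coincide, which is all that conservativeness sees.

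Finally, since $\Xx$ is protomodular, the pullback functor $\rho_2^*\colon\Pt_Y(\Xx)\to\Pt_{\alpha/1_Y}(\Xx)$ is conservative. As isomorphisms in an $\Ord$-enriched category are exactly the invertible underlying morphisms, if $V_\alpha(h)$ is an isomorphism in $\X$ then $\rho_2^*(h)$ is an isomorphism in $\Xx$, whence $h$ is an isomorphism; thus $V_\alpha$ is conservative and Theorem~\ref{V conservative iff |1|2|} applies, yielding also (ii). Applying this to $\X^\co$ gives the conservativeness of $H_\alpha$ and hence Corollary~\ref{H conservative iff |1|2|}. The main obstacle is the identification $V_\alpha(h)=\rho_2^*(h)$: one must recognise that the enriched change-of-base $V_\alpha$ secretly factors through the ordinary pullback along the auxiliary morphism $\rho_2\colon\alpha/1_Y\to Y$, the comma object $\alpha/1_Y$ (which depends only on $\alpha$) absorbing the lax comparison cell so that what remains is a strict pullback to which protomodularity of $\Xx$ can be applied.
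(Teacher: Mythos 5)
Your proof is correct, but it takes a genuinely different route from the paper's. The paper verifies condition (ii) of Theorem~\ref{V conservative iff |1|2|} directly: given a diagram as in that condition, it reuses the computations from the first part of the proof of the theorem (which do not depend on any conservativeness hypothesis) to conclude that the composite square \caixa{5}\caixa{2} is a ($2$-)pullback, where \caixa{5} is precisely your square exhibiting $\pi_2\colon\alpha/f\to X$ as the ($2$-)pullback of $f$ along $\rho_2\colon\alpha/1_Y\to Y$; it then invokes the pullback-cancellation form of protomodularity (condition (ii) of Theorem~\ref{characterization of protomodularity}) to cancel \caixa{5} and conclude that \caixa{2} is a pullback. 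You instead verify condition (i) of Theorem~\ref{V conservative iff |1|2|}: starting from the same key square (your application of Lemma~\ref{2 comma. 12 comma <=> 1 pb, vertical version}), you identify $V_\alpha$ on underlying arrows with the ordinary change-of-base $\rho_2^*\colon\Pt_Y(\Xx)\to\Pt_{\alpha/1_Y}(\Xx)$ and deduce conservativeness of $V_\alpha$ from condition (i) of Theorem~\ref{characterization of protomodularity}. So both arguments pivot on the same auxiliary construction ($\alpha/1_Y$, the comparison $\varphi_f$, and Lemma~\ref{2 comma. 12 comma <=> 1 pb, vertical version}), but they verify different items of the enriched equivalence and invoke different items of the classical one. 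What your route buys: it is self-contained, needing only the statement of Theorem~\ref{V conservative iff |1|2|} rather than the interior of its proof (in particular, it avoids the delicate verification there that appending $\beta$ to $\alpha/1_Y$ yields the comma object of $(\beta\alpha,1_V)$), and it isolates a reusable conceptual fact: in any such $\Ord$-enriched category the lax change-of-base $V_\alpha$ is, on underlying morphisms, an ordinary pullback change-of-base, the comma object $\alpha/1_Y$ absorbing the lax cell. What the paper's route buys: brevity on the page (it delegates to the already-written proof) and a derivation of condition (ii) that does not pass through the equivalence (i)\,$\Leftrightarrow$\,(ii). Your handling of the delicate points is sound: joint monicity of $(\rho_1,\rho_2)$ comes from the uniqueness clause in (C2); $V_\alpha(h)$ satisfies the two equations characterizing $\rho_2^*(h)$, which suffices because isomorphisms in the various point categories are detected on underlying $\Xx$-morphisms (so the mismatch of bases, $A$ versus $\alpha/1_Y$, is harmless); and the passage to $\X^\co$ correctly yields the $H_\alpha$ case, since $\X^\co$ again admits comma objects and $2$-pullbacks over the same protomodular $\Xx$.
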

\begin{proof} Note that pullbacks coincide with $2$-pullbacks in this setting. Consider a diagram as in Theorem~\ref{V conservative iff |1|2|}.(ii). Following its proof above, we deduce that \caixa{5}\caixa{2} is a ($2$-)pullback. Since \caixa{5} is also a ($2$-)pullback, we conclude that \caixa{2} is a ($2$-)pullback from protomodularity (Theorem \ref{characterization of protomodularity}).
\end{proof}

\begin{remark} If $\C$ is a finitely complete $\Ord$-enriched category $\C$ which admits comma objects, then the equivalences of Theorem~\ref{V conservative iff |1|2|} and Corollary~\ref{H conservative iff |1|2|} still hold when in the diagrams (ii) we conclude that \caixa{2} is a ($1$-)pullback. Indeed, in the proof of the implication (i) $\Rightarrow$ (ii) in Theorem~\ref{V conservative iff |1|2|}, we can take the pullback of $\beta$ and $g$ and still conclude that \caixa{7} is a $2$-pullback. The rest of that proof is the same. For (ii) $\Rightarrow$ (i) in Theorem~\ref{V conservative iff |1|2|}, we get that \caixa{*} is a pullback and can still conclude that $\gamma$ is an isomorphism.
\end{remark}

In order to prove that the remaining condition defining protomodular categories is equivalent to the other two, the following well-known fact is used: the conservativeness property of a left exact functor $F$ is equivalent to its conservativeness on monomorphisms. A functor $F$ is said to be conservative on monomorphisms if, for every monomorphism $f$, if $F(f)$ is an isomorphism, then so is $f$. Any pullback
functor $\alpha^* \colon \Pt_Y(\Xx)\to \Pt_A(\Xx)$ preserves finite limits, as soon as the base category $\Xx$ admits pullbacks along split epimorphisms. In the enriched context, such a property fails
to hold for the comma objects functors. In fact, given a morphism $\alpha\colon A\to Y$ and the terminal object $(1_Y,1_Y)$ of $\Pt_Y(\C)$, the comma object $\alpha/1_Y$ does
not necessarily give rise to the terminal object $(1_A,1_A)$ of $\Pt_A(\C)$ (nor does $1_Y/\alpha$). Despite this setback, we still obtain a similar result with respect to
the conservativeness of $V_{\alpha}$ and $H_{\alpha}$.

\begin{proposition}\label{V_alpha conservative iff conservative on monos} Let $\C$ be a finitely complete $\Ord$-enriched category which admits comma objects. The following statements are equivalent:
\begin{itemize}
	\item[(i)] $V_{\alpha}\colon \Pt_Y(\C)\to \Pt_A(\C)$ is conservative, for any morphism $\alpha\colon A\to Y$.
	\item[(ii)] $V_{\alpha}\colon \Pt_Y(\C)\to \Pt_A(\C)$ is conservative on monomorphisms, for any morphism $\alpha\colon A\to Y$.
\end{itemize}
\end{proposition}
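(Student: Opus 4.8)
The implication (i) $\Rightarrow$ (ii) is immediate, since monomorphisms form a subclass of all morphisms; the content is the converse. The plan is to adapt the classical argument that a \emph{left exact} functor is conservative if and only if it is conservative on monomorphisms. That argument runs through the diagonal: if $F(\gamma)$ is an isomorphism, then the diagonal $\Delta_{F(\gamma)}$ into the kernel pair of $F(\gamma)$ is an isomorphism (an isomorphism is in particular a monomorphism); if $F$ preserves kernel pairs this diagonal equals $F(\Delta_\gamma)$, so conservativeness on the (split) monomorphism $\Delta_\gamma$ forces $\gamma$ to be a monomorphism, and a second application of conservativeness on monomorphisms, now to $\gamma$ itself, yields that $\gamma$ is an isomorphism. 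Since $V_\alpha$ is \emph{not} left exact (it does not preserve the terminal object, as noted just above the statement), the task is to isolate the single exactness property this argument actually consumes, namely preservation of kernel pairs, and to verify it for $V_\alpha$.

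So the first and main step is the following claim: \emph{for every $\alpha\colon A\to Y$ the functor $V_\alpha\colon \Pt_Y(\C)\to \Pt_A(\C)$ preserves kernel pairs.} Fix a morphism $\gamma\colon (g,t')\to (f,s)$ in $\Pt_Y(\C)$, so $\gamma\colon Z\to X$ with $f\gamma=g$ and $\gamma t'=s$. Since $\C$ is finitely complete and the forgetful functor $\Pt_Y(\C)\to\C$ creates limits, the kernel pair of $\gamma$ is computed in $\C$: it is the pullback $K=Z\times_X Z$ with projections $k_1,k_2\colon K\to Z$, made into a point over $Y$ by the common composite $h:=gk_1=gk_2$ with section $\la t',t'\ra$. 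Writing $\rho_1,\rho_2$, $\pi_1,\pi_2$, $\kappa_1,\kappa_2$ for the projections of $\alpha/g$, $\alpha/f$, $\alpha/h$ respectively, one has $V_\alpha(\gamma)=\la \rho_1,\gamma\rho_2\ra$ and $V_\alpha(k_i)=\la \kappa_1,k_i\kappa_2\ra$. I claim $(\alpha/h,V_\alpha(k_1),V_\alpha(k_2))$ is the kernel pair of $V_\alpha(\gamma)$, using only the universal property (C2) of comma objects together with the fact (the uniqueness clause of (C2)) that two morphisms into a comma object coincide once their two projections coincide.

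To verify the universal property, a morphism $u\colon T\to \alpha/g$ is a pair $(p,q)=(\rho_1 u,\rho_2 u)$ with $\alpha p\preccurlyeq gq$, and $V_\alpha(\gamma)u$ has projections $(p,\gamma q)$; hence $V_\alpha(\gamma)u=V_\alpha(\gamma)v$ precisely when $p=p'$ and $\gamma q=\gamma q'$. The condition $\gamma q=\gamma q'$ yields a unique $w\colon T\to K$ with $k_1w=q$, $k_2w=q'$, and since $\alpha p\preccurlyeq gq=hw$ the pair $(p,w)$ induces by (C2) a unique $\theta=\la p,w\ra\colon T\to \alpha/h$ with $V_\alpha(k_1)\theta=u$ and $V_\alpha(k_2)\theta=v$; uniqueness of $\theta$ again follows from (C2). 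This is exactly the universal property of the kernel pair of $V_\alpha(\gamma)$, proving the claim. (Preservation of kernel pairs is consistent with the failure of left exactness: kernel pairs are connected limits, whereas the defect of $V_\alpha$ concerns only the terminal object.) With the claim in hand the remainder is formal: assuming $V_\alpha(\gamma)$ is an isomorphism, the diagonal $\Delta_\gamma\colon Z\to K$ is a split monomorphism in $\Pt_Y(\C)$ (split by $k_1$, and a genuine point morphism since $h\Delta_\gamma=g$ and $\Delta_\gamma t'=\la t',t'\ra$); by functoriality and kernel-pair preservation $V_\alpha(\Delta_\gamma)$ is the diagonal of $V_\alpha(\gamma)$, hence an isomorphism because $V_\alpha(\gamma)$ is a monomorphism. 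Applying (ii) to the monomorphism $\Delta_\gamma$ gives $\Delta_\gamma$ an isomorphism, i.e. $k_1=k_2$, i.e. $\gamma$ is a monomorphism; a final application of (ii) to $\gamma$ gives that $\gamma$ is an isomorphism, establishing (i).

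The main obstacle is exactly the kernel-pair preservation step. The classical equivalence rests on left exactness, which $V_\alpha$ lacks, so the argument only goes through once one checks directly — via the comma universal property (C2) and the order-enriched bookkeeping of the two projections — that the non-left-exact functor $V_\alpha$ nevertheless preserves this one connected limit. Everything else is a routine transcription of the classical diagonal argument into the point categories $\Pt_Y(\C)$ and $\Pt_A(\C)$.
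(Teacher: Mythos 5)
Your proposal is correct, and its overall skeleton coincides with the paper's: both proofs settle (ii) $\Rightarrow$ (i) by applying conservativeness on monomorphisms twice, first to the diagonal of the kernel pair of $\gamma$ (forcing the two kernel-pair projections to agree, so $\gamma$ is a monomorphism), then to $\gamma$ itself. Where you genuinely diverge is in how the crucial fact --- that $V_\alpha$ sends that diagonal to an isomorphism --- is established. The paper works under the isomorphism hypothesis on $V_\alpha(\gamma)$ from the outset: it uses Lemma~\ref{2 comma. 12 comma <=> 1 pb, vertical version} to see that the square relating the second comma projections is a $2$-pullback, takes kernel pairs vertically, and pastes again to exhibit $\alpha/f$, with projections $\pi_1$ and $\pi$, as the comma object of $(\alpha,f\gamma_1)$, so that $V_\alpha(d)=1_{\alpha/f}$ \emph{on the nose} for the diagonal point morphism $d\colon (f,s)\to(f\gamma_1,ds)$. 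You instead prove the stronger, hypothesis-free statement that $V_\alpha$ preserves kernel pairs, by a direct check against the universal property (C2), and then run the classical diagonal argument formally; the paper's identification is recovered as the special case of your lemma where $V_\alpha(\gamma)$ is invertible and the kernel pair is therefore trivial. Your route buys two things: it isolates exactly which fragment of left exactness survives the failure of terminal-object preservation (a reusable observation, and a cleaner explanation of why the classical argument still works), and, since it invokes only (C1)--(C2) and never (C3) or $2$-pullbacks, it applies verbatim to precomma objects --- consistent with the paper's remark at the end of Section~\ref{Ord-protomodularity} that all results there hold in the precomma setting, where the paper's own proof needs the ($1$-dimensional) analogue of the pasting lemma. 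Conversely, the paper's route stays inside the comma/$2$-pullback pasting toolkit already set up for Theorem~\ref{V conservative iff |1|2|} and reused in Theorem~\ref{V conservative on monos <=> (pi_2,s) jse}, so it costs nothing extra in context. Two small points you should spell out: that your comparison $\alpha/h\cong\Eq(V_\alpha(\gamma))$ is an isomorphism in $\Pt_A(\C)$, i.e.\ compatible with the point structures (projection $\kappa_1$ and section $\la 1_A,\la t',t'\ra\alpha\ra$) --- routine, since the forgetful functors $\Pt_Y(\C)\to\C$ create kernel pairs, these being connected limits; and that your parenthetical claim that the \emph{only} defect of left exactness of $V_\alpha$ is the terminal object is a heuristic you neither prove nor need, since kernel pairs are the sole limit your argument consumes.
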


\begin{proof} (i) $\Rightarrow$ (ii). Obvious \\
(ii) $\Rightarrow$ (i). Consider an arbitrary morphism $\alpha\colon A\to Y$ and a morphism $\gamma\colon (f,s) \to (g,t)$ in $\Pt_Y(\C)$. Suppose that $V_{\alpha}(\gamma)$ is an isomorphism
$$
\xymatrix@C=40pt@R=40pt{\alpha/f \ar@<-2pt>[ddr]_-{\pi_1} \ar[rr]^-{\pi_2} \ar@{->}[dr]^(.6){V_{\alpha}(\gamma)}|-{\cong} & &
								X \ar@<-4pt>[ddr]_(.3){f} \ar@<2pt>[dr]^-{\gamma} \\
								& \alpha/g \ar@<-2pt>[d]_(.4){\rho_1} \ar[rr]^(.4){\rho_2} & & U \ar@<-2pt>[d]_(.4){g} \\
								& A \ar[rr]_-{\alpha} \ar@<-2pt>[u]_(.6){j_1} \ar@<-2pt>[uul]_-{i_1} & & Y; \ar[uul]_(.65)s \ar@<-2pt>[u]_(.6)t}
$$
here $i_1=\la 1_A,s\alpha\ra$ and $j_1=\la 1_A,t\alpha\ra$.
From Lemma~\ref{2 comma. 12 comma <=> 1 pb, vertical version}, the top quadrangle is a $2$-pullback. Taking kernel pairs vertically we obtain a diagram where both downward squares are $2$-pullbacks
$$
\xymatrix@=30pt{
				\alpha/f \ar[r]^-{\pi} \ar@{=}[d] \pullback & \Eq(\gamma) \ar@<-5pt>[d]_-{\gamma_1} \ar@<5pt>[d]^-{\gamma_2} \\
				\alpha/f \ar[r]_-{\pi_2} & X; \ar[u]|-d}
$$
here $d=\la 1_X,1_X\ra$. Combining the $2$-pullback with first projections and the comma object of $(\alpha,f)$, we get a comma object (Lemma~\ref{2 comma. 12 comma <=> 1 pb, vertical version}) which is the front face in
$$
\xymatrix@C=40pt@R=40pt{\alpha/f \ar@<-2pt>[ddr]_-{\pi_1} \ar[rr]^-{\pi_2} \ar@{=}[dr]^(.6){V_{\alpha}(d)} & &
								X \ar@<-4pt>[ddr]_(.3){f} \ar@<2pt>@{ >->}[dr]^-{d} \\
								& \alpha/f \ar@<-2pt>[d]_(.4){\pi_1} \ar[rr]^(.4){\pi} & & \Eq(\gamma) \ar[d]_(.3){f\gamma_1} \\
								& A \ar[rr]_-{\alpha} \ar@<-2pt>[u]_(.6){i_1} \ar@<-2pt>[uul]_-{i_1} & & Y. \ar[uul]_(.65)s \ar@<-4pt>[u]_(.6){ds}}
$$
By assumption, the monomorphism $d$ is an isomorphism. This implies that $\gamma_1=\gamma_2$ and, consequently, $\gamma$ is a monomorphism. Applying our assumption again, we conclude that $\gamma$ is an isomorphism.
\end{proof}

\begin{corollary}\label{H_alpha conservative iff conservative on monos} Let $\C$ be a finitely complete $\Ord$-enriched category which admits comma objects. The following statements are equivalent:
\begin{itemize}
	\item[(i)] $H_{\alpha}\colon \Pt_Y(\C)\to \Pt_A(\C)$ is conservative, for any morphism $\alpha\colon A\to Y$.
	\item[(ii)] $H_{\alpha}\colon \Pt_Y(\C)\to \Pt_A(\C)$ is conservative on monomorphisms, for any morphism $\alpha\colon A\to Y$.
\end{itemize}
\end{corollary}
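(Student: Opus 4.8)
The plan is to deduce this corollary from Proposition~\ref{V_alpha conservative iff conservative on monos} by the same duality device used for the other horizontal statements: every horizontal result is obtained by applying the corresponding vertical result to the co-dual category $\C^{\co}$. Recall that $\C^{\co}$ has the same underlying ordinary category as $\C$, only the preorder $\preccurlyeq$ on each hom-set being reversed. Hence $\C^{\co}$ is again finitely complete, and it admits comma objects: the comma object of $(g,f)$ formed in $\C^{\co}$ is the comma object of $(f,g)$ formed in $\C$, with its two projections interchanged, because reversing $\preccurlyeq$ turns the defining inequality $g\,c_1\preccurlyeq_{\co} f\,c_2$ into $f\,c_2\preccurlyeq g\,c_1$, which is the comma inequality defining the comma object of $(f,g)$ in $\C$ after relabelling the projections. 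In particular, the comma object $\alpha/f$ of diagram~(1.ii), formed in $\C^{\co}$, is the comma object $f/\alpha$ of diagram~(1.iii), formed in $\C$.

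First I would make the identification of functors precise. Since points over an object, morphisms of points, monomorphisms and isomorphisms are all purely $1$-categorical notions, we have $\Pt_Y(\C^{\co})=\Pt_Y(\C)$ and $\Pt_A(\C^{\co})=\Pt_A(\C)$ as ordinary categories, and the classes of monomorphisms and of isomorphisms coincide in $\C$ and in $\C^{\co}$. Under the identification of comma objects above, the vertical comma-object functor $V_{\alpha}$ computed in $\C^{\co}$ coincides with the horizontal comma-object functor $H_{\alpha}$ computed in $\C$: on objects it sends the point $(g,t)$ to the point $f/\alpha \to A$ of diagram~(1.iii), and on morphisms the induced map $V_{\alpha}(\gamma)$ in $\C^{\co}$ is exactly the induced map $H_{\alpha}(\gamma)$ in $\C$.

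Given these identifications, the corollary follows at once: conservativeness of $H_{\alpha}$ in $\C$ is conservativeness of $V_{\alpha}$ in $\C^{\co}$, and likewise conservativeness on monomorphisms transfers verbatim, monos and isos being insensitive to the passage to $\C^{\co}$. Applying Proposition~\ref{V_alpha conservative iff conservative on monos} to the finitely complete $\Ord$-enriched category $\C^{\co}$, which admits comma objects, yields the equivalence of (i) and (ii) for $H_{\alpha}$ in $\C$.

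The only point requiring care — the main obstacle, such as it is — is the bookkeeping behind the identification $V_{\alpha}^{\C^{\co}}=H_{\alpha}^{\C}$: one must check that reversing $\preccurlyeq$ carries diagram~(1.ii) to diagram~(1.iii) with the roles of $\pi_1$ and $\pi_2$, of the sections $i_1=\la 1_A,s\alpha\ra$ and $i_2=\la s\alpha,1_A\ra$, and of rali versus lali points (as in Remark~\ref{remark comma objs of rali/lali is rali/lali}) correctly interchanged. This verification is routine and needs to be done only once. Alternatively, one could give a direct proof mirroring that of Proposition~\ref{V_alpha conservative iff conservative on monos}, using Lemma~\ref{2 comma. 12 comma <=> 1 pb} in place of Lemma~\ref{2 comma. 12 comma <=> 1 pb, vertical version} and forming kernel pairs horizontally; the duality argument is, however, shorter and avoids repeating the diagram chase.
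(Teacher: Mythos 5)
Your proof is correct and is exactly the argument the paper intends: the corollary is stated without proof because, as announced in Section~\ref{Ord-protomodularity}, each horizontal statement follows by applying the corresponding vertical result (here Proposition~\ref{V_alpha conservative iff conservative on monos}) to $\C^{\co}$, which is precisely your route. Your verification that the comma object $\alpha/f$ formed in $\C^{\co}$ is $f/\alpha$ in $\C$ with projections interchanged, so that $V_{\alpha}$ in $\C^{\co}$ is $H_{\alpha}$ in $\C$, and that points, monomorphisms and isomorphisms are insensitive to reversing $\preccurlyeq$, just makes explicit the routine bookkeeping the authors leave implicit.
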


Thanks to these facts, we get the following:

\begin{theorem}\label{V conservative on monos <=> (pi_2,s) jse} Let $\C$ be a finitely complete $\Ord$-enriched category which admits comma objects. The following statements are equivalent:
\begin{itemize}
	\item[(i)] $V_{\alpha}\colon \Pt_Y(\C)\to \Pt_A(\C)$ is conservative, for any morphism $\alpha\colon A\to Y$.
	\item[(ii)] for any comma object of the form
	$$
	\xymatrix{\alpha/f \ar[r]^-{\pi_2} \ar@<-2pt>[d]_-{\pi_1} \ar@{}[dr]|-{\preccurlyeq} & X \ar@<-2pt>[d]_-f\\
						A \ar@<-2pt>[u]_-{i_1} \ar[r]_-{\alpha} & Y, \ar@<-2pt>[u]_-s}
	$$
	the pair $(\pi_2,s)$ is jointly extremally epimorphic; here $i_1\stackrel{\mathrm{(1.ii)}}{=}\la 1_A,s\alpha\ra$.
\end{itemize}
\end{theorem}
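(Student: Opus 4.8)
The plan is to prove the two implications separately, using Proposition~\ref{V_alpha conservative iff conservative on monos} to trade conservativeness for conservativeness on monomorphisms in the direction (ii) $\Rightarrow$ (i). This statement is the lax counterpart of the equivalence (i) $\Leftrightarrow$ (iii) in Theorem~\ref{characterization of protomodularity}, and the arguments should run parallel to the classical ones, with pullbacks replaced by comma objects and with the universal property (C2) (together with (C1) to guarantee the relevant inequalities) playing the role that the pullback universal property plays classically.

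For (i) $\Rightarrow$ (ii), I would fix a point $(f,s)$ over $Y$, a morphism $\alpha\colon A\to Y$, and its comma object $\alpha/f$ with projections $\pi_1,\pi_2$. Suppose a monomorphism $m\colon M\to X$ is such that both $\pi_2$ and $s$ factor through it, say $\pi_2=mp$ and $s=mq$; I must show $m$ is an isomorphism. The idea is to realise $m$ as a morphism in $\Pt_Y(\C)$ whose image under $V_\alpha$ is invertible, and then invoke (i). Concretely, since $fmq=fs=1_Y$, the pair $(fm,q)$ is a point over $Y$ and $m\colon(fm,q)\to(f,s)$ is a morphism of points. I would then check that $V_\alpha(m)\colon\alpha/(fm)\to\alpha/f$ is an isomorphism: the factorization $p$, together with $\alpha\pi_1\preccurlyeq f\pi_2=fm\,p$, induces by the universal property of $\alpha/(fm)$ a morphism $\psi\colon\alpha/f\to\alpha/(fm)$ with $\pi_1'\psi=\pi_1$ and $\pi_2'\psi=p$, where $\pi_1',\pi_2'$ denote the projections of $\alpha/(fm)$. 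Comparing projections then shows that $\psi$ and $V_\alpha(m)$ are mutually inverse, and conservativeness of $V_\alpha$ forces $m$ to be an isomorphism, which is exactly the joint extremal-epimorphy of $(\pi_2,s)$.

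For (ii) $\Rightarrow$ (i), by Proposition~\ref{V_alpha conservative iff conservative on monos} it suffices to prove that $V_\alpha$ is conservative on monomorphisms. So I would take a monomorphism $\gamma\colon(f,s)\to(g,t)$ in $\Pt_Y(\C)$ with $V_\alpha(\gamma)$ an isomorphism, and consider the comma object $\alpha/g$ with projections $\rho_1,\rho_2$ and section $j_1=\la1_A,t\alpha\ra$. Applying hypothesis (ii) to the point $(g,t)$ yields that $(\rho_2,t)$ is jointly extremally epimorphic. It then remains to observe that both $\rho_2$ and $t$ factor through the monomorphism $\gamma$: indeed $t=\gamma s$, while $\rho_2=\gamma\pi_2V_\alpha(\gamma)^{-1}$, which follows from $\rho_2V_\alpha(\gamma)=\gamma\pi_2$ together with the invertibility of $V_\alpha(\gamma)$. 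The joint extremal-epimorphy of $(\rho_2,t)$ then forces $\gamma$ to be an isomorphism, as required.

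The hard part will be the verification, in (i) $\Rightarrow$ (ii), that $\psi$ is a two-sided inverse of $V_\alpha(m)$. One composite, $V_\alpha(m)\psi=1_{\alpha/f}$, follows immediately by comparing both projections, since $\pi_1V_\alpha(m)\psi=\pi_1$ and $\pi_2V_\alpha(m)\psi=m\pi_2'\psi=mp=\pi_2$. The other, $\psi V_\alpha(m)=1_{\alpha/(fm)}$, is more delicate: one computes $\pi_1'\psi V_\alpha(m)=\pi_1'$ directly, but for the second projection only the weaker identity $m\,\pi_2'\psi V_\alpha(m)=\pi_2V_\alpha(m)=m\,\pi_2'$ is visible, and it is precisely the monicity of $m$ that allows cancelling it to obtain $\pi_2'\psi V_\alpha(m)=\pi_2'$ before appealing to the uniqueness clause of (C2). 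Keeping track of which comma-object universal property is in use at each stage — that of $\alpha/f$ versus that of $\alpha/(fm)$, and the (C1) inequalities that legitimise the uniqueness appeals — is the only genuinely fiddly bookkeeping.
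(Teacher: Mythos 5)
Your proof is correct and follows essentially the same route as the paper: both directions hinge on realising $m$ as a morphism of points $(fm,\sigma)\to(f,s)$ whose image under $V_\alpha$ is invertible, and, for the converse, on Proposition~\ref{V_alpha conservative iff conservative on monos} together with the factorisations $t=\gamma s$ and $\rho_2=\gamma\pi_2V_\alpha(\gamma)^{-1}$ through the monomorphism $\gamma$. The only cosmetic difference is in (i)~$\Rightarrow$~(ii), where the paper invokes Lemma~\ref{2 comma. 12 comma <=> 1 pb, vertical version} to recognise $\alpha/f$ itself, with projections $\pi_1$ and $p$, as the comma object of $(\alpha,fm)$, so that $V_\alpha(m)=1_{\alpha/f}$ on the nose, whereas you construct the inverse $\psi$ of $V_\alpha(m)$ explicitly from (C2) and the monicity of $m$.
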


\begin{proof} (i) $\Rightarrow$ (ii). Let $m$ be a monomorphism for which the diagram
$$
\xymatrix{ & M \ar@{ >->}[d]^-m \\
					 \alpha/f \ar[ur]^-p \ar[r]_-{\pi_2} & X & Y \ar[ul]_-{\sigma} \ar[l]^-s}
$$
commutes. We get the following diagram
$$
\xymatrix@C=40pt@R=40pt{\alpha/f \ar@<-2pt>[ddr]_-{\pi_1} \ar[rr]^-{p} \ar@{=}[dr]^-{V_{\alpha}(m)=1_{\alpha/f}} & &
								M \ar@<-2pt>[ddr]_(.3){fm} \ar@<4pt>@{ >->}[dr]^-{m} \\
								& \alpha/f \ar@<-2pt>[d]_(.4){\pi_1} \ar[rr]^(.4){\pi_2} & & X \ar@<-2pt>[d]_(.3){f}\\
								& A \ar[rr]_-{\alpha} \ar@<-2pt>[u]_(.6){i_1} \ar@<-2pt>[uul]_-{i_1} & & Y. \ar@<-2pt>[u]_(.7)s \ar@<-2pt>[uul]_(.65){\sigma}}
$$
Note that the top quadrangle is a $2$-pullback, so that the back face is a comma object diagram (Lemma~\ref{2 comma. 12 comma <=> 1 pb, vertical version}). Since $m$ is a monomorphism and $mpi_1=\pi_2i_1=s\alpha=m\sigma\alpha$, it follows that $pi_1=\sigma\alpha$. From the assumption we conclude that $m$ is an isomorphism.\\
(ii) $\Rightarrow$ (i) Conversely, for any morphism $\alpha\colon A\to Y$, a diagram as above gives a factorisation of $\pi_2$ and $s$ through the monomorphism $m$, which is then an isomorphism by assumption. This proves that $V_{\alpha}$ is conservative on monomorphisms, which is equivalent to being conservative (Proposition~\ref{V_alpha conservative iff conservative on monos}).
\end{proof}

\begin{corollary}\label{H conservative on monos <=> (pi_1,s) jse} Let $\C$ be a finitely complete $\Ord$-enriched category which admits comma objects. The following statements are equivalent:
\begin{itemize}
	\item[(i)] $H_{\alpha}\colon \Pt_Y(\C)\to \Pt_A(\C)$ is conservative, for any morphism $\alpha\colon A\to Y$.
	\item[(ii)] for any comma object of the form
	$$
	\xymatrix{f/\alpha \ar@<2pt>[r]^-{\pi_2} \ar[d]_-{\pi_1} \ar@{}[dr]|-{\preccurlyeq} & A \ar@<2pt>[l]^-{i_2} \ar[d]^-{\alpha}\\
						X \ar@<2pt>[r]^-{f} & Y, \ar@<2pt>[l]^-s}
	$$

	the pair $(\pi_1,s)$ is jointly extremally epimorphic; here $i_2\stackrel{\mathrm{(1.iii)}}{=}\la s\alpha,1_A\ra$.
\end{itemize}
\end{corollary}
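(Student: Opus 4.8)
The plan is to deduce this horizontal statement from its vertical counterpart, Theorem~\ref{V conservative on monos <=> (pi_2,s) jse}, by passing to the dual $\Ord$-enriched category $\C^\co$, exactly in the spirit of the remark that ``the equivalences obtained in $\C$ in one direction give immediately the corresponding results in the other direction when applied to $\C^\co$''. First I would check that $\C^\co$ satisfies the hypotheses of Theorem~\ref{V conservative on monos <=> (pi_2,s) jse}: its underlying ordinary category coincides with that of $\C$, so it is again finitely complete, and it is $\Ord$-enriched for the reversed preorder. Crucially, $\C^\co$ admits comma objects, because the comma object of an ordered pair $(f,g)$ computed in $\C^\co$ --- where (C1) now reads $gc_2\preccurlyeq fc_1$ in $\C$ --- is precisely the comma object $g/f$ of $\C$. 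In particular, for a point $(f,s)$ over $Y$ and a morphism $\alpha\colon A\to Y$, the comma object $\alpha/f$ formed in $\C^\co$ is the comma object $f/\alpha$ of $\C$, with the two projections interchanged: the $\C^\co$-projection onto $A$ is $\pi_2\colon f/\alpha\to A$ and the $\C^\co$-projection onto $X$ is $\pi_1\colon f/\alpha\to X$.

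Next I would match the two change-of-base functors under this duality. Since a point is a split epimorphism with a chosen splitting, a purely equational datum, the categories $\Pt_Y(\C^\co)$ and $\Pt_A(\C^\co)$ coincide with $\Pt_Y(\C)$ and $\Pt_A(\C)$. Tracing the definitions, the vertical functor $V_\alpha$ computed in $\C^\co$ sends a point $(g,t)$ to the point on $f/\alpha$ with split projection $\pi_2$ and section $\la s\alpha,1_A\ra$, that is, to the value of the horizontal functor $H_\alpha$ in $\C$; so $V_\alpha$ in $\C^\co$ is literally $H_\alpha$ in $\C$ (this is also how colax protomodularity was introduced in the Introduction). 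Along the way the section $i_1=\la 1_A,s\alpha\ra$ of the vertical theorem becomes $i_2=\la s\alpha,1_A\ra$ in $\C$, as required by statement~(ii).

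With these identifications in place, Theorem~\ref{V conservative on monos <=> (pi_2,s) jse} applied to $\C^\co$ reads: $H_\alpha\colon\Pt_Y(\C)\to\Pt_A(\C)$ is conservative for every $\alpha$ if and only if, for the comma object $f/\alpha$ of $\C$, the pair consisting of the $\C^\co$-projection onto $X$ --- namely $\pi_1\colon f/\alpha\to X$ --- together with $s$ is jointly extremally epimorphic, which is exactly~(ii). The main point requiring care, and the only genuine obstacle beyond the bookkeeping of projections and sections, is to confirm that each notion being transported is invariant under $(-)^\co$: conservativeness (detecting an isomorphism), being a monomorphism, and being a jointly extremally epimorphic pair all refer solely to the underlying ordinary category, which is unchanged when reversing the enrichment. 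Granting this invariance, the equivalence transfers verbatim from $\C^\co$ to $\C$ and the corollary follows.
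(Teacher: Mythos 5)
Your proposal is correct and coincides with the paper's own (implicit) proof: the paper states this corollary without a separate argument, relying precisely on the remark that the vertical results applied to $\C^\co$ yield the horizontal ones, and your careful verification --- that comma objects in $\C^\co$ are those of $\C$ with projections swapped, that $V_\alpha$ in $\C^\co$ is $H_\alpha$ in $\C$ with section $i_2=\la s\alpha,1_A\ra$, and that conservativeness, monomorphisms, and joint extremal epimorphicity are properties of the unchanged underlying category --- is exactly the bookkeeping this duality requires. Nothing further is needed.
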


Having recovered the lax versions of the equivalences in Theorem \ref{characterization of protomodularity} for $\Ord$-enriched categories, we can now propose the following:

\begin{definition}\label{Ord-enriched protomodularity} A finitely complete $\Ord$-enriched category $\C$ which admits comma objects and $2$-pullbacks is called \defn{lax protomodular} when the comma object functor $V_{\alpha}$ is conservative for any morphism $\alpha\colon A\to Y$ in $\C$. We say that $\C$ is \defn{colax protomodular} when $\C^\co$ is lax protomodular, that is when the comma object functor $H_{\alpha}$ is conservative for any morphism $\alpha\colon A\to Y$ in $\C$.
\end{definition}

Exactly as in the classical case, it is immediate to see that, in a pointed $\Ord$-enriched category $\C$ with comma objects, the conservativeness of all comma object funtors $V_{\alpha}$ is equivalent to the conservativeness of the functors $V_{i_A}$, where $i_A \colon 0 \to A$ is the only arrow from the zero object (and the same holds for the $H$'s). Then, in the pointed context, $\C$ is lax protomodular if and only if the following \emph{lax version of the Split Short Five Lemma} holds:

\begin{theorem} Let $\C$ be a pointed finitely complete $\Ord$-enriched category which admits comma objects and 2-pullbacks. $\C$ is lax protomodular if and only if, given a commutative diagram of split sequences of the form
\[ \xymatrix{ 0/f \ar[d]_{\alpha} \ar[r]^k & X \ar[d]_{\beta} \ar@<-2pt>[r]_f & A \ar[d]^{\gamma} \ar@<-2pt>[l]_s \\
0/f' \ar[r]^k & X \ar@<-2pt>[r]_{f'} & A', \ar@<-2pt>[l]_{s'} } \]
if $\alpha$ and $\gamma$ are isomorphisms, then $\beta$ is, too.
\end{theorem}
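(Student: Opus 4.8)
The plan is to deduce the statement from the reduction recalled just before it: in the pointed setting, lax protomodularity is equivalent to the conservativeness of the functors $V_{i_A}\colon \Pt_A(\C)\to\Pt_0(\C)$ associated with the unique arrows $i_A\colon 0\to A$ out of the zero object. First I would unwind what these functors compute. Since $0$ is a zero object, a point over $0$ carries no extra data, so $\Pt_0(\C)$ is isomorphic to $\C$; and for a point $(f\colon X\to A,\, s\colon A\to X)$ the object underlying $V_{i_A}(f,s)$ is precisely the comma object $0/f=i_A/f$ appearing in the split sequences of the statement, the morphism $k$ being the projection $\pi_2\colon 0/f\to X$ and the defining inequality (C1) reading $0\preccurlyeq f\pi_2$. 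Hence, for a morphism $\beta\colon (f,s)\to (f',s')$ of $\Pt_A(\C)$, the arrow $V_{i_A}(\beta)$ is exactly the induced comparison $0/f\to 0/f'$ between comma objects, i.e. the left-hand vertical map of a morphism of split sequences over a fixed base.

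For the forward implication I would start from the given diagram and first remove the base map. As $\gamma$ is an isomorphism of an $\Ord$-enriched category, it is an order-isomorphism on hom-preorders, it satisfies $\gamma i_A=i_{A'}$, and it therefore preserves comma objects. Transporting the lower point along $\gamma$ replaces $(f',s')$ by the point $(\gamma^{-1}f',\, s'\gamma)$ over $A$, turns $\beta$ into a genuine morphism of $\Pt_A(\C)$ from $(f,s)$ to this transported point, and identifies $0/f'$ with $0/(\gamma^{-1}f')$. Under this identification the original comparison $\alpha$ becomes $V_{i_A}(\beta)$, which is an isomorphism by hypothesis. Since $V_{i_A}$ is conservative (it is one of the functors $V_\alpha$ and $\C$ is lax protomodular), $\beta$ is an isomorphism.

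For the converse I would verify conservativeness of each $V_{i_A}$ directly and then invoke the equivalence recalled at the outset. Given $\beta\colon (f,s)\to (g,t)$ in $\Pt_A(\C)$ with $V_{i_A}(\beta)$ an isomorphism, forming the comma objects $0/f$ and $0/g$ produces two split sequences over $A$ together with a morphism between them whose base component is $1_A$ and whose kernel component is $V_{i_A}(\beta)$, hence an isomorphism. Applying the lax Split Short Five Lemma with $\gamma=1_A$ yields that $\beta$ is an isomorphism, so $V_{i_A}$ is conservative for every $A$; by the stated reduction, $\C$ is lax protomodular.

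The routine but delicate point --- and the one I expect to be the main obstacle --- is the bookkeeping around the base change along $\gamma$ in the forward direction. One must check that transport along the order-isomorphism $\gamma$ carries $0/f'$ to $0/(\gamma^{-1}f')$ compatibly with all the structural maps, and that the comparison $\alpha$ of the original diagram coincides, after this identification, with $V_{i_A}(\beta)$. This is a chase of the universal property (C2): from $f'\beta=\gamma f$, the inequality $0\preccurlyeq f\pi_2$, and monotonicity of $\gamma$ (together with $\gamma i_A=i_{A'}$) one gets $0\preccurlyeq f'\beta\pi_2$, so $\beta\pi_2$ factors through $0/f'$, and this factorization is exactly $\alpha$; the care is in ensuring the identification is canonical rather than merely up to an unspecified isomorphism. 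None of these steps is deep, but each must be carried out explicitly so that the equality $V_{i_A}(\beta)=\alpha$ is genuine.
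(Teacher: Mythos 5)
Your proposal is correct and takes essentially the same route as the paper: the paper states this theorem as an immediate consequence of the reduction (recalled just before it) of lax protomodularity to the conservativeness of the functors $V_{i_A}$ for $i_A\colon 0\to A$, which is precisely your argument in both directions. Your additional bookkeeping---transporting the lower point along the isomorphism $\gamma$ and checking $\alpha=V_{i_A}(\beta)$, which is indeed forced since $\pi_2\colon 0/f'\to X'$ is monic (the $\pi_1$-component of any map into $0/f'$ is the unique map to $0$, so (C2) uniqueness applies)---merely makes explicit what the paper leaves as ``immediate''.
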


A similar result holds for pointed colax protomodular categories.

\begin{remark}
All the results of this section still hold when replacing comma objects with precomma objects and when replacing 2-pullbacks with (1-)pullbacks. We use the same notation $V_\alpha$ and $H_\alpha$ for the precomma object functors. A finitely complete $\Ord$-enriched category $\C$ which admits precomma objects is called \defn{lax preprotomodular} when the precomma object functor $V_{\alpha}$ is conservative for any morphism $\alpha$ in $\C$. We say that $\C$ is \defn{colax preprotomodular} when $\C^\co$ is lax preprotomodular.
\end{remark}

\begin{example}\label{examples}
\begin{enumerate}
	\item According to Corollary~\ref{proto => 2-proto}, an $\Ord$-enriched protomodular category with comma objects and 2-pullbacks is both lax and colax protomodular.
	\item If $\mathcal{T}$ is the theory of a protomodular variety, then the category $\C^{\mathcal{T}}$  of internal $\mathcal{T}$-algebras in $\C$ is also protomodular, and so it is both lax protomodular and colax protomodular, for any compatible $\Ord$-enrichment which admits 2-pullbacks and comma objects.

This applies in particular to $\Ord^{\mathcal T}$.
We point out, however, that, for any algebraic theory $\mathcal{T}$ which contains a Mal'tsev operation, the preorder of any internal $\mathcal T$-algebra in $\Ord$ is symmetric: if $X\in \Ord^{\mathcal T}$ and $p\colon X^3\to X$ is a monotone Mal'tsev operation, then, for $x,y\in X$ with $x\leq y$ one obtains that $y=p(x,x,y)\leq p(x,y,y)=x$. This is the case of every theory $\mathcal{T}$ of a protomodular variety since it has a Mal'tsev operation defined by
\[p(x,y,z)=\theta(\alpha_1(x,y),\dots,\alpha_n(x,y),z)\]
(using the operations described in Example \ref{exa:prot}). Therefore, the obvious $\Ord$-enrichment inherited from $\Ord$ -- the pointwise $\Ord$-enrichment -- will be also symmetric.
	\item The former example raises the question
\begin{center}
	\textsf{Is there a protomodular category with a non-degenerate $\Ord$-enrichment with comma objects and 2-pullbacks}?
\end{center}
(where by degenerate $\Ord$-enrichment we mean one whose hom-sets preorders are symmetric).

Since the dual of an elementary topos is protomodular, one can more specifically ask:
\begin{center}
	\textsf{Is there an elementary topos with a non-degenerate $\Ord$-enrichment admitting cocomma objects and 2-pushouts}?
\end{center}
We have put this question to Peter Johnstone, who collected some interesting results on the subject in \cite{PTJ}. Namely:
\begin{itemize}
\item[--] \emph{The only poset-enrichment of a localic topos over $\Set$ is the discrete one.}
\item[--] \emph{The $\Ord$-enrichment of a topos for which equalizers and exponential adjunctions are $\Ord$-enriched is degenerate.}
\end{itemize}
Moreover, in \cite{PTJ} an example of a topos with a non-degenerate $\Ord$-enrichment is presented, but it does not fulfil our conditions since it does not have cocomma objects.
\item In the next section we will study the behaviour of $\OrdAb$ equipped with a suitable $\Ord$-enrichment. It is not an example of a (co)lax proto\-modular category since it does not admit comma objects (although it admits precomma objects). Hence we pose the more general open question
\begin{center}
	\textsf{Is there any lax protomodular or colax protomodular non-degenerate $\Ord$-enriched category}?
\end{center}
\end{enumerate}
\end{example}

\section{The 2-category $\OOrdAb$ of preordered abelian groups}\label{The category of preordered abelian groups OrdAb}

Both the categories $\Grp$ of groups and $\Ab$ of abelian groups are protomodular, while $\OrdGrp$ and $\OrdAb$ are not (see Theorem 4.6 in~\cite{CM-FM}). In this section we introduce an enriched preorder structure on morphisms which does work in favour of lax protomodularity for $\OrdAb$.

We start by analysing possible $\Ord$-enrichments for the category $\OrdGrp$ of preordered groups and monotone homomorphisms. We recall that a preordered group is a (not necessarily abelian) group $(X,+,0)$ equipped with a preorder $\le$ such that the group operation is monotone
\[x\le y, u\le v\;\; \Rightarrow\;\; x+u \le y+v,\]
for any elements $x,y,u,v\in X$; their morphisms are the monotone group homomorphisms.
The preorder of a group $(X,+,0)$ is completely determined by its \emph{positive cone}, which is the submonoid of $X$, closed under conjugation, given by its positive elements, $P_X=\{x\in X: 0\leqslant x\}$.

In $\OrdGrp$ the pointwise preorder on morphisms trivializes; that is, if one defines, for morphisms $f,g\colon X\to Y$, $f\leq g$ if, for all $x\in X$, $f(x)\leq g(x)$, then also $f(-x)\leq g(-x)$, and consequently, $\leq$ is symmetric. So, instead we use the pointwise order restricted to positive elements, and define, for morphisms $f,g\colon X \to Y$ of $\OrdGrp$,
\begin{equation}
\label{< for OrdAb}
	f\preccurlyeq g \;\;\Leftrightarrow\;\; \forall x\in P_X, f(x)\leq g(x).
\end{equation}
It is straightforward to check that (pre)composition preserves the preorder of $\OrdGrp(X,Y)$, for any preordered groups $X$ and $Y$, and so this defines an $\Ord$-enriched category $\OOrdGrp$.\\

$\OOrdGrp$ does not have precomma objects in general. In order to prove this assertion first note that, if \eqref{comma f/g} is a comma object in $\OOrdGrp$, then $C$ is isomorphic to $X\times Z$, as a group, and $c_1, c_2$ are the product projections. This follows easily from the following inequality
\[\vcenter{\xymatrix@=30pt{(X\times Z,{0}) \ar[r]^-{\pi_Z} \ar[d]_-{\pi_X} \ar@{}[dr]|-{\preccurlyeq} & Z \ar[d]^-g \\
								 X \ar[r]_-f & Y}}\]
and the universal properties of products and comma objects. So what remains to be studied is the existence of a positive cone for $X\times Z$ that makes \eqref{comma f/g} a comma object. Let $Y$ be a preordered group, $y\in P_Y$ and $\varphi\colon \Z\to Y$ with $\varphi(1)=y$, where $\Z$ is the usual ordered group of integers, and assume that the comma object of $(\varphi, 1_Y)$ exists in $\OOrdGrp$:
\[\vcenter{\xymatrix@=30pt{(\Z\times Y,P) \ar[r]^-{\pi_2} \ar[d]_-{\pi_1} \ar@{}[dr]|-{\preccurlyeq} & Y \ar[d]^-{1_Y} \\
								 \Z \ar[r]_-\varphi & Y.}}\]
Then it is easy to check that $(1,y)\in P$, and so, by conjugation, $(1,a+y-a)$ belongs also to $P$, for any $a\in Y$. Therefore $y=(\varphi\pi_1)(1,a+y-a)\leq\pi_2(1,a+y-a)=a+y-a$. Since this inequality is not valid in general, we conclude that the comma object may not exist. As an example of this failure, consider the group $Y$ of monotone bijective (and therefore continuous) endomaps of the real line $y\colon\R\to\R$ with the operation given by composition, ordered by $y\leq y'$ if, for every $x\in\R$, $y(x)\leq y'(x)$. Then, for instance, for $y,a\colon\R\to\R$ defined by $y(x)=x+1$ and $a(x)=x^3$, $y$ is positive, since, for every $x$, $x\le y(x)$, but $y\not\leq a\circ y \circ a^{-1}$.\\

To overcome this absence of precomma objects, we focus on its full subcategory $\OOrdAb$ of preordered abelian groups.
Here the preorder of an abelian group $(X,+,0)$ is completely determined by a submonoid of $X$ of its positive elements, $P_X=\{x\in X: 0\leqslant x\}$, since closedness under conjugation comes for free.
We consider in $\OOrdAb$ the $\Ord$-enrichment inherited from $\OOrdGrp$.

Then $\OOrdAb$ does not admit comma objects in general, but it admits precomma objects. As for $\OOrdGrp$, if \eqref{comma f/g} is a (pre)comma object in $\OOrdAb$, then $C$ is isomorphic to $X\times Z$, as a group, and $c_1,c_2$ are the product projections as in the diagram below:
\begin{equation}\label{diag:OrdAb}
	\xymatrix{f/g=(X\times Z,P_{f/g}) \ar[d]_-{\pi_1} \ar[r]^-{\pi_2} \ar@{}[dr]|-{\preccurlyeq} & Z \ar[d]^-g \\
						X \ar[r]_-f & Y.}
\end{equation}
The positive cone of $f/g=X\times Z$ must be \[P_{f/g}=\{(x,z)\in P_{X\times Z}:f(x)\le g(z)\}.\] Indeed, if $\alpha\colon A\to X$, $\beta\colon A\to Z$ are such that $f\alpha\preccurlyeq g\beta$, then $\langle\alpha,\beta\rangle\colon A\to X\times Z$ is monotone: for every $a\in A$, if $a\geq 0$ then both $\alpha(a)\geq 0$ and $\beta(a)\geq 0$, and, moreover, $f\alpha(a)\leq g\beta(a)$, hence $\langle\alpha,\beta\rangle(a)\in P_{f/g}$.

To check that \eqref{diag:OrdAb} is not always a comma object, consider $f=g=1_{\Z}$, where again $\Z$ is the usual ordered group of integers, so that $P_{f/g}=\{(n,m)\in\Z\times\Z\;;\;n\geq 0, m\geq 0, n\leq m\}$. The homomorphisms $t,t'\colon \Z\to\Z\times\Z$, defined by $t(n)=(n,4n)$ and $t'(n)=(3n,5n)$ for every $n\in\Z$, are monotone and such that $\pi_1 t\preccurlyeq\pi_1 t'$ and $\pi_2 t\preccurlyeq \pi_2 t'$, but $t\not\preccurlyeq t'$: $t(1)\leq t'(1)$ would imply $(2,1)=t'(1)-t(1)\in P_{f/g}$, which is false.

Note that, in the precomma object above, $\pi_2$ is split by the morphism $\langle 0,1_Z\rangle:Z\to X\times Z$ (as usual), while $\pi_1$ is not split by the group homomorphism $\langle 1_X,0\rangle\colon X\ito X\times Z$, since it is not monotone (denoted $\ito$ to emphasize it is not a morphism in $\OOrdAb$). Actually, $(\pi_2, \la 0,1_Z\ra)$ is a rali point: for any $(x,z)\in P_{f/g}$, we get $(0,z)\leq (x,z)$, because $x\in P_X$; thus $\la 0,1_Z\ra \pi_2\preccurlyeq 1_{f/g}$.\\

The following result proves the conservativeness of the vertical precomma object change-of-base functor with respect to the slice category, which implies the same property with respect to points. Thus \emph{$\OOrdAb$ is a lax preprotomodular category}.

\begin{proposition}\label{V conservative in OrdAb} For any morphism $\alpha\colon A\to Y$ in $\OOrdAb$, the precomma object functor \linebreak $V_{\alpha}\colon \OOrdAb/Y\to \OOrdAb/A$ is conservative.
\end{proposition}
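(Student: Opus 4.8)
The plan is to work entirely with the explicit description of precomma objects in $\OOrdAb$ recorded in the discussion preceding the statement, and to turn the hypothesis into a statement about positive cones. First I would make the functor $V_\alpha$ concrete on the slice category. An object of $\OOrdAb/Y$ is a monotone homomorphism $g\colon (Z,P_Z)\to Y$, and its precomma object along $\alpha$ has underlying group $A\times Z$ with positive cone
\[
P_{\alpha/g}=\{(a,z)\in P_{A\times Z}: \alpha(a)\leq g(z)\},
\]
the map $V_\alpha(g)=\pi_1\colon \alpha/g\to A$ being the product projection. Given a morphism $\gamma\colon (Z,g)\to (Z',g')$ in $\OOrdAb/Y$ (so that $g'\gamma=g$), the universal property of $\alpha/g'$ forces $V_\alpha(\gamma)$ to be, on underlying groups, the homomorphism $1_A\times\gamma\colon A\times Z\to A\times Z'$, $(a,z)\mapsto(a,\gamma(z))$, since $g'(\gamma\pi_2)=g\pi_2$ guarantees the induced map exists.

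Next I would unpack the hypothesis that $V_\alpha(\gamma)=1_A\times\gamma$ is an isomorphism in $\OOrdAb/A$. An isomorphism in $\OOrdAb$, and hence in any slice, is a bijective monotone homomorphism whose inverse is again monotone. Since $1_A\times\gamma$ is bijective exactly when $\gamma$ is, this already yields that $\gamma$ is a bijective group homomorphism; being monotone by hypothesis, it remains only to show that $\gamma^{-1}$ is monotone, i.e. that $\gamma^{-1}(P_{Z'})\subseteq P_Z$. The extra information available is that the inverse $1_A\times\gamma^{-1}$ of $V_\alpha(\gamma)$ is monotone, that is, $(1_A\times\gamma^{-1})(P_{\alpha/g'})\subseteq P_{\alpha/g}$.

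The key step, and the only point that genuinely uses the shape of the positive cone, is to feed this monotonicity the right test elements. For an arbitrary $z'\in P_{Z'}$ I would consider the pair $(0,z')$. Since $0\in P_A$, $z'\in P_{Z'}$ and $\alpha(0)=0\leq g'(z')$, the last inequality holding because $g'$ is monotone and $z'\geq 0$, we have $(0,z')\in P_{\alpha/g'}$. Monotonicity of $1_A\times\gamma^{-1}$ then gives $(0,\gamma^{-1}(z'))\in P_{\alpha/g}$, whence $\gamma^{-1}(z')\in P_Z$. As $z'$ was an arbitrary positive element, $\gamma^{-1}$ is monotone, so $\gamma$ is an isomorphism in $\OOrdAb/Y$, proving that $V_\alpha$ is conservative.

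The main obstacle to anticipate is precisely this last probing argument: a priori the hypothesis only constrains $\gamma^{-1}$ on those $z'$ admitting some $a\in P_A$ with $\alpha(a)\leq g'(z')$, and one must notice that the single choice $a=0$ already reaches every positive $z'$, because positivity of $z'$ forces $g'(z')\geq 0=\alpha(0)$. Everything else is a routine transcription of the universal properties of products and precomma objects into statements about positive cones.
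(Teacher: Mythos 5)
Your proof is correct and is essentially the paper's own argument in elementwise form: the paper writes the inverse as the composite $\rho_2\, V_\alpha(\gamma)^{-1}\langle 0,1_X\rangle$, whose monotonicity rests on the splitting $\langle 0,1_X\rangle$ being monotone, which is exactly your observation that the test element $(0,z')$ lies in the positive cone because $\alpha(0)=0\leq g'(z')$. The only difference is presentation (composites of monotone maps versus chasing positive elements), so there is nothing substantive to change.
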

\begin{proof} We build the diagram
$$
\xymatrix@C=30pt@R=40pt{A\times Z \ar[ddr]_-{\rho_1} \ar@<2pt>[rr]^-{\rho_2} \ar@{->}[dr]^(.6){V_{\alpha}(\gamma)=1_A\times \gamma}|-{\cong} & &
								Z \ar[ddr]_(.3){g} \ar[dr]^-{\gamma} \ar@<2pt>[ll]^-{\la 0,1_Z\ra}\\
								& A\times X \ar[d]_(.4){\pi_1} \ar@<2pt>[rr]^(.4){\pi_2} & & X \ar[d]^-{f} \ar@<2pt>[ll]^-{\la 0,1_X\ra}\\
								& A \ar[rr]_-{\alpha} & & Y}
$$
\begin{center}\tiny
		\hspace{75pt}($\alpha \pi_1\preccurlyeq f \pi_2$, $\;\alpha \rho_1\preccurlyeq g \rho_2$, 	$\;f\gamma = g$)
\end{center}
where the front and back faces are precomma diagrams. Note that the upper trapezoid also commutes with the precomma projection splitings, i.e. $\langle 0,1_X\rangle \gamma=V_{\alpha}(\gamma)\langle 0,1_Z\rangle$. If $V_{\alpha}(\gamma)$\linebreak $(=1_A\times \gamma)$ is an isomorphism, then $\gamma$ is also an isomorphism. The inverse of $\gamma$ is given by the composite of morphisms $\rho_2 V_{\alpha}(\gamma)^{-1}\langle 0,1_X\rangle\colon$ $X\to Z$, since
$$
\begin{array}{l}
	\gamma \rho_2 V_{\alpha}(\gamma)^{-1}\langle 0,1_X\rangle = \pi_2 V_{\alpha}(\gamma) V_{\alpha}(\gamma)^{-1} \langle 0,1_X\rangle = \pi_2 \langle 0,1_X\rangle = 1_X \\
	\rho_2 V_{\alpha}(\gamma)^{-1}\langle 0,1_X\rangle \gamma = \rho_2 V_{\alpha}(\gamma)^{-1} V_{\alpha}(\gamma) \langle 0,1_Z\rangle = \rho_2 \langle 0,1_Z\rangle = 1_Z.
\end{array}
$$
\end{proof}

The corresponding horizontal result does not hold for $\OOrdAb$, as the next example shows.

\begin{example}\label{OrdAb is no Ord-coproto} In the following diagram
\[\xymatrix@R=30pt@C=40pt{(\Z\times\Z,0) \ar@{}[dr]|-{\mbox{\caixa{1}}} \ar@<2pt>[r] \ar[d]_-{1_{\Z\times \Z}} & 0 \ar@<2pt>[l] \ar[d]\\
													(\Z\times\Z,0\times\N) \ar@{}[dr]|-{\mbox{\caixa{2}}}\ar@<2pt>[r]^-+ \ar[d]_-{1_{\Z\times \Z}} & (\Z,\N) \ar[d]^-{1_{\Z}} \ar@<2pt>[l]^-{\langle 0,1\rangle}  \\
													(\Z\times\Z,\N\times\N) \ar@<2pt>[r]^-+  &  (\Z,\N), \ar@<2pt>[l]^-{\langle 0,1\rangle}}
\]
it is easily checked that both \caixa{1} and the outer rectangle are precomma objects. Indeed the positive cone of $\Z\times\Z$ for both precomma objects is given by \[\{(z,z')\in\Z\times\Z\,;\,z\geq 0,\, z'\geq 0,\, z+z'\leq 0\}=\{(0,0)\}.\] However, \caixa{2} is commutative but it is not a pullback, showing that $\OOrdAb$ does not satisfy condition (ii) of Corollary \ref{H conservative iff |1|2|}, that is, \emph{$\OOrdAb$ is not colax preprotomodular}.
\end{example}

With respect to the horizontal precomma object change-of-base functor and its conservativeness for $\OOrdAb$, we can only prove that each $H_{\alpha}$ is conservative when applied to rali points. As mentioned above, the top projection of a precomma object of a pair of morphisms \linebreak $(f\colon X \to Y,g\colon Z\to Y)$ always gives rise to a rali point $(\pi_2,\la 0,1_Z\ra)$. However, taking the precomma object of $(f,g)$, when $(f,s)$ is a rali point, gives rise to a point $(\pi_2,\la sg,1_Z\ra)$ which is not necessarily rali. For instance, in the precomma object of $(1_\Z,1_\Z)$, $(\pi_2,\la 1_\Z,1_\Z\ra)$ is not a rali. So, the functor below goes from rali points to ordinary points.

\begin{proposition}\label{H conservative in OrdAb} For any morphism $\alpha\colon A\to Y$ in $\OOrdAb$, the functor
$$
	H^{r}_{\alpha}\colon \Ptr_Y(\OOrdAb)\to \Pt_A(\OOrdAb)
$$
is conservative.
\end{proposition}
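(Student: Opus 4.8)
The plan is to follow the strategy of Proposition~\ref{V conservative in OrdAb}, but to confront the essential asymmetry that makes the horizontal case subtler. Writing the two rali points as $(f,s)$ with $f\colon X\to Y$ and $(g,t)$ with $g\colon Z\to Y$, and a morphism of points $\gamma\colon Z\to X$ (so $f\gamma=g$ and $\gamma t=s$), the relevant horizontal precomma objects are $f/\alpha=(X\times A,P_{f/\alpha})$ and $g/\alpha=(Z\times A,P_{g/\alpha})$ with $P_{f/\alpha}=\{(x,a)\colon x\in P_X,\ a\in P_A,\ f(x)\leq\alpha(a)\}$, and the functor acts by $H^{r}_{\alpha}(\gamma)=\gamma\times 1_A$. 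Exactly as in the vertical case, if $\gamma\times 1_A$ is an isomorphism then its underlying group homomorphism $\gamma$ is a group isomorphism, and $\gamma$ is already monotone (being a morphism), so the whole problem reduces to showing that the group-theoretic inverse $\gamma^{-1}\colon X\to Z$ is monotone, i.e. $\gamma^{-1}(P_X)\subseteq P_Z$.

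The main obstacle is precisely where the vertical argument breaks down: in $P_{f/\alpha}$ the defining inequality is $f(x)\leq\alpha(a)$, with the orientation opposite to the one appearing in $\alpha/f$. Hence the naive monotone splitting $\langle 1_X,0\rangle\colon X\to X\times A$ of $\pi_1$ that would mimic Proposition~\ref{V conservative in OrdAb} is no longer a morphism (it would force $f(x)\leq 0$), which is exactly why the horizontal statement fails for general points, as Example~\ref{OrdAb is no Ord-coproto} shows. I expect the rali hypothesis to be the device that repairs this. The key input is that, since $\gamma\times 1_A$ is an isomorphism, the inverse $\gamma^{-1}\times 1_A$ is monotone, which unwinds to the implication: for $x\in P_X$ and $a\in P_A$ with $f(x)\leq\alpha(a)$ one has $\gamma^{-1}(x)\in P_Z$. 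Choosing $a=0$ already yields $\gamma^{-1}(x)\in P_Z$ for every $x\in P_X$ with $f(x)\leq 0$, in particular for every positive element of $\ker f$.

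The decisive step is then to split an arbitrary $x\in P_X$ into a ``kernel part'' and a ``section part'', using raliness to keep both positive. Writing $x=sf(x)+(x-sf(x))$: the rali inequality $sf\preccurlyeq 1_X$ gives $sf(x)\leq x$, hence $x-sf(x)\in P_X$, and since $fsf=f$ this element lies in $\ker f$, so the previous paragraph gives $\gamma^{-1}(x-sf(x))\in P_Z$. For the remaining term, $\gamma^{-1}s=t$ (from $\gamma t=s$), so $\gamma^{-1}(sf(x))=tf(x)$, which is positive because $f$ and $t$ are monotone and $x\in P_X$. Adding the two positive contributions gives $\gamma^{-1}(x)=tf(x)+\gamma^{-1}(x-sf(x))\in P_Z$, so $\gamma^{-1}$ is monotone and $\gamma$ is an isomorphism in $\OOrdAb$. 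The only points I expect to need care are the reduction ``$\gamma\times 1_A$ an isomorphism $\Rightarrow$ $\gamma$ a group isomorphism'' and the bookkeeping confirming that the target's raliness is in fact not used, only that of $(f,s)$.
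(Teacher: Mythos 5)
Your proposal is correct and follows essentially the same route as the paper's proof: the paper likewise decomposes a positive $x\in P_X$ as $sf(x)+(x-sf(x))$, uses $sf\preccurlyeq 1_X$ to get $x-sf(x)\in P_X\cap\ker f$ so that $(x-sf(x),0)\in P_{f/\alpha}$, and produces the positive preimage $\overline{z}=\rho_1 H^{r}_{\alpha}(\gamma)^{-1}(x-sf(x),0)+tf(x)$, which is exactly your element $\gamma^{-1}(x)=\gamma^{-1}(x-sf(x))+tf(x)$. The only cosmetic difference is that the paper phrases the conclusion as $\gamma(P_Z)\supseteq P_X$ (regular epimorphism) rather than as monotonicity of $\gamma^{-1}$, and, as you anticipated, only the raliness of $(f,s)$ is used.
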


\begin{proof} We build the diagram
$$
\xymatrix@R=30pt@C=40pt{Z\times A \ar[dd]_-{\rho_1} \ar[drrr]^-{\rho_2} \ar@{->}[dr]|(.4){H^r_{\alpha}(\gamma)=\gamma\times 1_A}|(.6){\cong} \\
								& X\times A \ar[dd]_(.4){\pi_1} \ar[rr]^(.3){\pi_2} && A \ar[dd]^-{\alpha} \\
								Z \ar@<2pt>[drrr]^(.3){g} \ar[dr]_-{\gamma} \\
								& X \ar@<2pt>[rr]^(.3){f} && Y \ar@<2pt>[ll]^(.7)s \ar@<2pt>[lllu]^(.7)t }
$$
\begin{center}\tiny
		\hspace{90pt}($\;f\pi_1\preccurlyeq \alpha\pi_2$, $\;g\rho_1\preccurlyeq \alpha\rho_2$, $\;f\gamma = g$, $\; \gamma t=s$, $sf\preccurlyeq 1_X$)
\end{center}
where the front and back faces are precomma diagrams. If $H^r_{\alpha}(\gamma)(=\gamma\times 1_A)$ is an isomorphism, then it easily follows that $\gamma$ is a monomorphism (=injective) and an epimorphism (=surjective). We cannot proceed as in the previous proof since the vertical projections of precomma objects need not be split epimorphisms. To conclude that $\gamma$ is an isomorphism, it suffices to show that $\gamma$ is a regular epimorphism, i.e. that $\gamma(P_Z)\supseteq P_X$, which gives $\gamma(P_Z)=P_X$.

For $x\in P_X$, we get $x-sf(x)\in P_X$, because $f$ is rali, thus $(x-sf(x),0)\in P_{X\times A}$. Since $f(x-sf(x))=0\le \alpha(0)$, then $(x-sf(x),0)\in P_{f/\alpha}$. It follows that $\rho_1 H^r_{\alpha}(\gamma)^{-1}(x-sf(x),0)\in P_Z$. We also have $tf(x)\in P_Z$. So, there exists an element $\overline{z}=\rho_1 H^r_{\alpha}(\gamma)^{-1}(x-sf(x),0)+tf(x)\in P_Z$, such that $\gamma(\overline{z})=x-sf(x)+\gamma tf(x)=x-sf(x)+sf(x)=x$.
\end{proof}

Finally one remark that $\Ord$-enriched regularity does not follow from the corresponding\linebreak 1-dimensional property. Indeed, as an epireflective subcategory of the regular category $\OrdGrp$ (see \cite{CM-FM} and \cite{BCGS}), $\OrdAb$ is a regular category in the sense of~\cite{Barr}, but $\OOrdAb$ is not $\Ord$-regular (cf. \cite{BG} and \cite{KV}), neither in the case we consider as right factor $\mathcal{M}$ of the factorization the fully faithful morphisms nor the monic and fully faithful morphisms. Indeed, as we show next, in both cases there is a class of morphisms $\mathcal{E}$ such that $(\mathcal{E},\mathcal{M})$ is a \emph{non-stable} orthogonal factorisation system.

\begin{proposition} Let $\mathcal{M}$ (respectively $\mathcal{M}'$) be the class of (respectively monic and) fully faithful morphisms in $\OOrdAb$ and let \[\mathcal{E}=\{h\colon A\to B\mbox{ bijective}\,;\mbox{ for all }b\in P_B, \,b=h(a'-a),\mbox{ for }a',a\in P_A\}, \mbox{ and}\] \[\mathcal{E}'=\{h\colon A\to B\mbox{ surjective}\,;\,\mbox{ for all }b\in P_B, \,b=h(a'-a),\mbox{ for }a',a\in P_A\}.\]
\begin{enumerate}
\item A morphism $f\colon X\to Y$ in $\OOrdAb$ belongs to $\mathcal{M}$ if, and only if, for all $x,x'\in P_X$, $x\leq x'\;\Leftrightarrow\;f(x)\leq f(x')$; a fully faithful morphism $f$ belongs to $\mathcal{M}'$ if, in addition, it is an injective map.
\item Given a commutative diagram
\begin{equation}\label{diag:square}
\xymatrix{A\ar[d]_h\ar[r]^u&X\ar[d]^f\\
B\ar[r]_v&Y,}
\end{equation}
with $h\in\mathcal{E}'$ and $f\in\mathcal{M}'$, there exists a unique morphism $d\colon B\to X$ such that $dh=u$ and $fd=v$.
\item Given a commutative diagram \eqref{diag:square}, with $h\in\mathcal{E}$ and $f\in\mathcal{M}$, there exists a unique morphism $d\colon B\to X$ such that $dh=u$ and $fd=v$.
\item Every morphism in $\OOrdAb$ factors through a morphism in $\mathcal{E}'$ followed by a morphism in $\mathcal{M}'$.
\item Every morphism in $\OOrdAb$ factors through a morphism in $\mathcal{E}$ followed by a morphism in $\mathcal{M}$.
\item $(\mathcal{E},\mathcal{M})$ and $(\mathcal{E}',\mathcal{M}')$ are non-stable factorization systems.
\end{enumerate}
\end{proposition}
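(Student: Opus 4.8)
The plan is to dispatch the six parts in order: the first five are computations with positive cones, and the last assembles them into factorization systems together with an explicit witness of non-stability. Throughout I write $P_X-P_X$ for the subgroup of $X$ generated by the positive cone, i.e. the set of differences $a'-a$ with $a,a'\in P_X$.

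For part (1) I would characterise $\mathcal{M}$ by testing full faithfulness on copies of $\Z$. Monotone homomorphisms $(\Z,\N)\to X$ correspond to elements of $P_X$ (the generator going to a positive element), and for two such maps $\varphi_x,\varphi_{x'}$ one has $\varphi_x\preccurlyeq\varphi_{x'}$ iff $x\le x'$. Applying the definition of full faithfulness to these maps yields, for $x,x'\in P_X$, the implication $f(x)\le f(x')\Rightarrow x\le x'$; combined with monotonicity this is the stated equivalence. Conversely, if the equivalence holds then for arbitrary $a,a'\colon A\to X$ with $fa\preccurlyeq fa'$ and any $p\in P_A$ we have $a(p),a'(p)\in P_X$ and $f(a(p))\le f(a'(p))$, whence $a(p)\le a'(p)$; thus $a\preccurlyeq a'$. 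For $\mathcal{M}'$ I would note that a monomorphism in $\OOrdAb$ is exactly an injective homomorphism: injectivity suffices, and conversely a nonzero $k\in\ker f$ gives two distinct maps $(\Z,\{0\})\to X$ (the zero map and $\varphi_k$, both monotone since the domain has cone $\{0\}$) equalised by $f$. Hence $f\in\mathcal{M}'$ iff $f$ is fully faithful and injective.

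Parts (2) and (3) share one diagonalisation, differing only in how the underlying group map is produced. Given the square with $fu=vh$, I first construct $d$ as a homomorphism of abelian groups: in (3), $h$ is bijective, so $d:=uh^{-1}$ is the unique solution of $dh=u$; in (2), $h$ is only surjective, but injectivity of $f$ forces $\ker h\subseteq\ker u$ (if $h(k)=0$ then $fu(k)=vh(k)=0$, so $u(k)=0$), so $u$ factors uniquely through the epimorphism $h$ as $d$, and $fd=v$ follows since $h$ is epic. The crux is that $d$ is monotone. For $b\in P_B$ write $b=h(a'-a)$ with $a,a'\in P_A$ (this is exactly where $h\in\mathcal{E}$, resp. $\mathcal{E}'$, enters); then $d(b)=u(a')-u(a)\in P_X-P_X$, while $f(d(b))=v(b)\in P_Y$. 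Full faithfulness of $f$, applied to the positive elements $u(a),u(a')$, gives $f(u(a))\le f(u(a'))\Rightarrow u(a)\le u(a')$, i.e. $d(b)\in P_X$. Uniqueness is automatic since $h$ is epic. For parts (4) and (5) I would exhibit the factorisations by choosing the correct intermediate cone. For (4), factor $f\colon X\to Y$ as $X\xrightarrow{e}f(X)\xrightarrow{m}Y$ with $m$ the inclusion and $f(X)$ carrying $P:=f(P_X-P_X)\cap P_Y$; then $f(P_X)\subseteq P$ makes $e$ a monotone surjection, every element of $P$ is $f(a'-a)=e(a')-e(a)$ so $e\in\mathcal{E}'$, and since $P-P\subseteq f(P_X-P_X)$ one gets $P=(P-P)\cap P_Y$, i.e. $m\in\mathcal{M}'$. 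For (5), keep the underlying group and set $e=1_X\colon X\to X'$ with $X'=(X,(P_X-P_X)\cap f^{-1}(P_Y))$ and $m=f$ on groups; the analogous checks give $e\in\mathcal{E}$ (bijective, with target cone inside $P_X-P_X$) and $m\in\mathcal{M}$ (fully faithful).

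Finally, for part (6), that $(\mathcal{E},\mathcal{M})$ and $(\mathcal{E}',\mathcal{M}')$ are orthogonal factorization systems follows by combining the factorisations (4),(5) with the unique diagonal fill-in (2),(3); containment of isomorphisms and closure under composition are a direct check (for the left classes one pulls a difference-of-positives in the target of a composite back through each factor, using that sums of positive elements are positive). For non-stability I would give one pullback serving both systems. Take $h=1_{\Z}\colon(\Z,\N)\to(\Z,\Z)$, which lies in $\mathcal{E}\subseteq\mathcal{E}'$ since every integer is a difference of naturals, and pull it back along $v=-1_{\Z}\colon(\Z,\N)\to(\Z,\Z)$ (monotone, as the codomain cone is all of $\Z$). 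The pullback has underlying group $\{(a,c)\in\Z^2: a=-c\}\cong\Z$ and positive cone $\{(a,c)\in\N\times\N: a=-c\}=\{(0,0)\}$, so the projection onto the domain of $v$ (which is the pullback of $h$) is, up to isomorphism, $1_{\Z}\colon(\Z,\{0\})\to(\Z,\N)$. This map is bijective and surjective but lies in neither $\mathcal{E}$ nor $\mathcal{E}'$, since $1\in\N$ is not a difference of elements of $\{0\}$. Hence neither left class is pullback-stable. The heart of the matter is parts (4) and (5): the work is in identifying the cones $f(P_X-P_X)\cap P_Y$ and $(P_X-P_X)\cap f^{-1}(P_Y)$ that place the two factors in the prescribed classes. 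Once these are guessed the verifications are routine, and the genuinely conceptual point is the diagonalisation of (2),(3), where positivity becomes a difference under $\mathcal{E}$ and is reflected back on the generated subgroup by the full faithfulness of $\mathcal{M}$.
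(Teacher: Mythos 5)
Your proof is correct and takes essentially the same approach as the paper's: you test full faithfulness on maps out of $(\Z,\N)$, build the diagonal $d$ via surjectivity/bijectivity of $h$ and reflect positivity of $d(b)=u(a')-u(a)$ through the fully faithful $f$, and your intermediate cones $f(P_X-P_X)\cap P_Y$ and $(P_X-P_X)\cap f^{-1}(P_Y)$ are exactly the paper's $P'$ and $P$ written in subgroup notation. Even your non-stability witness is the paper's pullback square up to the isomorphism $-1\colon(\Z,\N)\to(\Z,-\N)$, so the only genuine additions are harmless extra details (the proof that monomorphisms are injective, and closure of $\mathcal{E}$ under composition).
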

\begin{proof}
(1) If $f\colon X\to Y$ is fully faithful and $x,x'\in P_X$ are such that $f(x)\leq f(x')$, then $\varphi, \varphi'\colon \Z\to X$ defined by $\varphi(1)=x$ and $\varphi(1)=x'$ are morphisms in $\OOrdAb$ such that $f\varphi\preccurlyeq f\varphi'$, and so $\varphi\preccurlyeq\varphi'$, or, equivalently, $x\leq x'$. Conversely, if for all $x,x'\in P_X$, $x\leq x'\;\Leftrightarrow\;f(x)\leq f(x')$ and $f g\preccurlyeq f h$, for $g,h\colon W\to X$, then, for every $w\in P_W$, $f(g(w))\leq f(h(w))$ and so $g(w)\leq h(w)$, that is $g\preccurlyeq h$.\\

(2) Given \eqref{diag:square} with $h\in\mathcal{E}'$ and $f\in\mathcal{M}'$, since $h$ is surjective and $f$ is injective, we define the homomorphism $d\colon B\to X$ as usual in $\Ab$: $d(b)=u(a)$ for any $a\in h^{-1}(b)$. Then $d$ is the unique map such that $d h=u$ and $f d=v$. It remains to show that it is monotone: if $b\in P_B$, then $b=h(a'-a)$ for some $a,a'\in P_A$, and $d(b)=u(a'-a)$. From $v(b)=f(u(a'-a))\in P_Y$, we get $u(a'-a)\in P_X$, since $f$ is fully faithful; thus $d(b)=u(a'-a)\in P_X$.\\

(3) An analogous argument shows that morphisms in $\mathcal{E}$ are orthogonal to fully faithful morphisms: given \eqref{diag:square} with $h\in\mathcal{E}$ and $f\in\mathcal{M}$, since $h$ is bijective we define $d\colon B\to X$ by $d(b)=u(a)$, where $a$ is the unique element of $h^{-1}(b)$. Monotonicity of $d$ follows from arguments similar to those used above.\\

(4) Every morphism $g\colon Z\to Y$ factors as $\xymatrix{Z\ar[r]^-{g'}&(g(Z),P')\ar[r]^-m&Y}$, where\linebreak $P'=\{y\in P_Y\,;\;y=g(z'-z)$ for $z,z'\in P_Z\}$, with, by construction, the corestriction $g'$ of $g$ in $\mathcal{E}'$ and the inclusion $m$ in $\mathcal{M}'$.\\

(5) Analogously, every morphism $g\colon Z\to Y$ factors as $\xymatrix{Z\ar[r]^-{1_Z}&(Z,P)\ar[r]^-{\tilde{g}}&Y}$, where\linebreak $P=\{z\in Z; g(z)\in P_Y$ and $z=z''-z'$ for $z',z''\in P_Z\}$, with the identity $1_Z$ in $\mathcal{E}$, and $\tilde{g}$ defined as $g$, which is fully faithful due to the way $P$ is defined.\\

(6) Since all these classes are closed under composition with isomorphisms, we may conclude that both pairs are factorization systems in $\OOrdAb$ (cf. \cite[Definition 14.1]{AHS}).

To show that they are not stable, we consider the following pullback
\[\xymatrix{(\Z,\{0\})\ar[r]^1\ar[d]_1&(\Z,-\N)\ar[d]^1\\
(\Z,\N)\ar[r]_1&(\Z,\Z),}\]
where $(\Z,\N)\to (\Z,\Z)$ belongs to both $\mathcal{E}$ and $\mathcal{E}'$ but $(\Z,\{0\})\to(\Z,-\N)$ does not belong to either of them.
\end{proof}

\begin{remark}
In \cite{CM21} the category $\VGrp$ of $V$-groups and $V$-homomorphisms, for a commutative and unital quantale $V$, was studied. (The reader may want to give a look at \cite{CM21}, and the subsequent paper \cite{Mi}, to know more on $V$-groups.)

Very briefly, we point out that what we have done for $\OrdAb$ can be generalized for the category $\VAb$ of abelian $V$-groups and $V$-homomorphisms, for a commutative and unital quantale $V$. The $\Ord$-enrichment in $\VAb$ is defined, for $V$-homomorphisms $f,g\colon (X,a)\to(Y,b)$ by $f\preccurlyeq g$ if, for all $x\in X$, $a(0,x)\leq b(f(x),g(x))$ in $V$; as for $\OrdAb$, we denote this $\Ord$-enriched category by $\OVAb$. This category has precomma objects: given morphisms $f\colon (X,a)\to(Y,b)$ and $g\colon (Z,c)\to(Y,b)$ in $\OVAb$, the precomma object is defined as in the following diagram
\[
	\xymatrix{(X\times Z,d) \ar[d]_-{\pi_1} \ar[r]^-{\pi_2} \ar@{}[dr]|-{\preccurlyeq} & (Z,c) \ar[d]^-g \\
						(X,a) \ar[r]_-f & (Y,b),}
\]
where $d((x,z),(x',z'))=a(x,x')\wedge c(z,z')\wedge b(f(x'-x),g(z'-z))$, for every $(x,z), (x',z')\in X\times Z$. Since $d$ makes $X\times Z$ a $V$-category and it is invariant under shifting, $(X\times Z,d)$ is a $V$-group by \cite[Proposition 3.1]{CM21}. Moreover, from the definition of $d$ it follows that the projections $\pi_1$ and $\pi_2$ are $V$-homomorphisms, and that $f\pi_1\preccurlyeq g\pi_2$. The universal property of this diagram is easily checked, that is, conditions (C1) and (C2) are satisfied.

Now it is clear that the inclusion
\[\xymatrix{\OrdAb\ar[r]&\VAb}\]
becomes $\Ord$-enriched and preserves precomma objects, and so we may conclude directly that $\OVAb$ does not have comma objects in general.

From the failure of lax preprotomodularity for $\OOrdAb^\co$ it follows immediately that $\OVAb^\co$ is not lax preprotomodular. Still, using exactly the arguments of the proof of Proposition \ref{V conservative in OrdAb} and the description of precomma objects above, it is straightforward to conclude that $\OVAb$ is lax preprotomodular.

\end{remark}

\section*{Acknowledgments}
The authors are grateful to John Bourke for his interesting tips on 2-dimen\-sional regularity, Graham Manuell for useful discussions on $\Ord$-enrichments on toposes, and mostly to Peter T. Johnstone for his thorough thoughts on topos $\Ord$-enrichments, which gave rise to the notes \cite{PTJ}.


\begin{thebibliography}{10}
\bibitem{AHS} J.~Ad\'{a}mek, H.~Herrlich, G.E. Strecker, \emph{Abstract and Concrete Categories: The Joy of Cats}, Reprint of the 1990 original (Wiley, New York 1990). Repr. Theory Appl. Categ. \textbf{17} (2006) 1--507.

\bibitem{Barr} M.~Barr, P.A.~Grillet and D.H.~van~Osdol, \emph{Exact categories and categories of sheaves}, Lecture Notes in Math. 236, Springer-Verlag (1971).

\bibitem{BB} F.~Borceux and D.~Bourn, \emph{Mal'cev, protomodular, homological and semi-abelian categories}, Math. Appl., vol. 566, Kluwer Acad. Publ., 2004.
\bibitem{BCGS} F.~Borceux, M.M.~Clementino, M.~Gran and L.~Sousa, \emph{Protolocalisations of homological categories}, J. Pure Appl. Algebra \textbf{212} (2008) 1898--1927.
\bibitem{BG} J.~Bourke and R.~Garner, \emph{Two-dimensional regularity and exactness}, {J.~Pure} Appl. Algebra \textbf{218} (2014) 1346--1371.
\bibitem{NEKEAC} D.~Bourn, \emph{Normalization equivalence, kernel equivalence and affine categories}, Springer Lecture Notes in Math. 1488 (1991) 43-62.
\bibitem{MCFPO} D.~Bourn, \emph{Mal'cev categories and fibration of pointed objects}, Appl. Categ. Structures \textbf{4} (1996), 307--327.
\bibitem{BJ} D.~Bourn and G.~Janelidze, \emph{Characterization of protomodular varieties of universal algebras}, Theory Appl. Categ. \textbf{11} (2003) 143--147.
\bibitem{CM-FM} M.M. Clementino, N. Martins-Ferreira, A. Montoli, \emph{On the categorical behaviour of preordered groups}, {J.~Pure} Appl. Algebra \textbf{223} (2019) 4226--4245.
\bibitem{CM21} M.M.~Clementino and A.~Montoli, \emph{On the categorical behaviour of $V$-groups}, {J.~Pure} Appl. Algebra \textbf{225} (2021) 106550.
\bibitem{PTJ} P.T.~Johnstone, \emph{Notes on $\Ord$-enrichment of Toposes}, manuscript, June 2022.
\bibitem{KV} A. Kurz and J. Velebil, \emph{Quasivarieties and varieties of ordered algebras: regularity and exactness}, Math. Struct. Comput. Sci. \textbf{27} (2017) 1153--1194.
\bibitem{Mi} A.~Michel, \emph{Torsion theories and coverings of $V$-groups}, Appl. Categ. Strutures \textbf{30} (2022) 659--684.
\bibitem{Vassilis} V. Aravantinos-Sotiropoulos, \emph{The exact completion for regular categories enriched in posets}, {J.~Pure} Appl. Algebra 226 (2022) 106885.
\end{thebibliography}
\end{document}